\newtheorem{theorem}            {Theorem}[section]
\newtheorem{definition}         [theorem]{Definition}
\newtheorem{lemma}              [theorem]{Lemma}
\newtheorem{proposition}        [theorem]{Proposition}
\newtheorem{example}        [theorem]{Example}
\newcommand{\dd}   {{\rm d}\hbox{\hskip 0.5pt}}
\newcommand{\calP}{\mathbf{P}}
\newcommand{\rline}{{\mathbb R}}
\newcommand{\bbm}[1]{\left[\begin{matrix} #1 \end{matrix}\right]}
\newcommand{\sbm}[1]{\left[\begin{smallmatrix} #1
   \end{smallmatrix}\right]}
\newcommand{\rfb}[1]{\mbox{\rm
   (\ref{#1})}\ifx\undefined\stillediting\else:\fbox{$#1$}\fi}
\newcommand{\R}{\mathbb{R}}
\newcommand{\bi}{\bibitem}
\newcommand{\bluff}{{\hbox{\raise 15pt \hbox{\hskip 0.5pt}}}}
\newfont{\roma}{cmr10 scaled 1200}
\title{\LARGE \bf Stability analysis and controller design for a linear system with Duhem hysteresis nonlinearity}
\author{ Ruiyue Ouyang, Bayu Jayawardhana\thanks{B. Jayawardhana and Ruiyue Ouyang are with the Dept. Discrete Technology and Production Automation, University of Groningen, Groningen 9747AG, The Netherlands {e-mail: bayujw@ieee.org, r.ouyang@rug.nl}}. }
\begin{document}

\maketitle
\thispagestyle{empty}
\pagestyle{empty}

\begin{abstract}

In this paper, we investigate the stability of a feedback interconnection between a linear system and a Duhem hysteresis operator, where the linear system and the Duhem hysteresis operator satisfy either the counter-clockwise (CCW) or clockwise (CW) input-output dynamics. More precisely, we present sufficient conditions for the stability of the interconnected system that depend on the CW or CCW properties of the linear system and the Duhem operator. Based on these results we introduce a control design methodology for stabilizing a linear plant with a hysteretic actuator or sensor without requiring precise information on the hysteresis operator.
\end{abstract}

\section{Introduction}
Hysteresis is a common phenomenon that is present in diverse systems, such as piezo-actuator, ferromagnetic material and mechanical systems. For describing hysteresis phenomena, several hysteresis models have been proposed in the literature, see, for example, \cite{BROKATE1996, MACKI1993, LOGEMANN2003}. These include backlash model \cite{TARBOURIECH2012} which is used to describe gear trains, Preisach model for modeling the ferromagnetic systems and elastic-plastic model which is used to study mechanical friction \cite{BROKATE1996, MACKI1993}. From the perspective of input-output behavior, the hysteresis phenomena can exhibit counterclockwise (CCW) input-output (I/O) dynamics \cite{ANGELI2006}, clockwise (CW) I/O dynamics \cite{PADTHE2005}, or even more complex I/O map (such as, butterfly map \cite{BERTOTTI2006}). For example, backlash model generates CCW hysteresis loops, elastic-plastic model generates CW hysteresis loops and Preisach model can generate CCW, CW or butterfly hysteresis loops depending on the weight of the hysterons which are used in the Preisach model.
%

The CCW and CW I/O dynamics of a system can also be related to certain dissipation inequalities \cite{ANGELI2006, Petersen2010, SCHAFT2011}. Denoting $AC$ as the class of absolutely continuous functions, we show in \cite{JAYAWARDHANAAUTOMATICA2012} that for a class of Duhem hysteresis operator $\Phi:AC(\rline_+) \times \rline \rightarrow AC(\rline_+)$, we have that for every $u_{\Phi} \in AC(\rline_+)$ there exists a function $H_{\circlearrowleft} : \rline^2 \rightarrow \rline_+$ which satisfies
\begin{equation}\label{passivehysteresis}
\frac{\dd H_{\circlearrowleft}(y_{\Phi}(t),u_{\Phi}(t))}{\dd t} \leq
\dot{y}_{\Phi}(t)u_{\Phi}(t)
\end{equation}
for almost every $t$, where $y_{\Phi}=\Phi(u_{\Phi},y_{\Phi_0})$ and $y_{\Phi_0} \in \rline$ is the initial condition. The inequality \rfb{passivehysteresis} characterizes the CCW I/O property of the operator $\Phi$. We will discuss this property in detail in Section \ref{CCW-CW-Duhem}. Here, we use the symbol $\circlearrowleft$ in $H_{\circlearrowleft}$ to indicate the counterclockwise behavior of $\Phi$.

As a dual result to \cite{JAYAWARDHANAAUTOMATICA2012}, in \cite{RUIYUESCL2012} we give sufficient conditions on the Duhem hysteresis operator such that it exhibits CW input-output dynamics. In particular for a class of Duhem operator $\Phi$, we construct a function $H_{\circlearrowright}: \rline^2 \rightarrow \rline_+$ which satisfies
\begin{equation}\label{passivehysteresis_cw}
\frac{\dd H_{\circlearrowright}(y_{\Phi}(t),u_{\Phi}(t))}{\dd t} \leq
\dot{u}_{\Phi}(t)y_{\Phi}(t)
\end{equation}
for almost every $t$. Correspondingly, the symbol $\circlearrowright$ in $H_{\circlearrowright}$ indicates the clockwise behavior of $\Phi$.

\begin{figure}[h]
\centering \psfrag{P}{$\calP$}\psfrag{p}{${\bf \Phi}$}
 \includegraphics[height=1.2in]{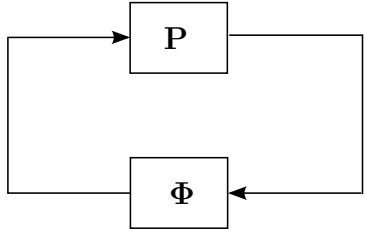}
\caption{Feedback interconnection between a linear plant $\calP$ and a Duhem operator ${\bf \Phi}$.}
\label{blockscheme}
\end{figure}

In this paper, we exploit our knowledge on $H_{\circlearrowleft}$ and $H_{\circlearrowright}$ to study the stability of an interconnected system as shown in Figure \ref{blockscheme}, where $\calP$ is a linear system and ${\bf \Phi}$ is the hysteresis operator. We consider four cases of interconnections where the plant $\calP$ and the hysteresis operator ${\bf \Phi}$ can assume either CCW or CW I/O dynamics. These four cases are summarized in Table \ref{tab1}
\begin{table}[h]
\caption{Four Possible cases of interconnection}
\centering
\begin{tabular}{|l|l|l|}
                                                                                             \hline
                                                                                             \backslashbox{${\bf \Phi}$}{$\calP$} &CCW & CW \\
                                                                                             \hline
                                                                                             CCW & \textcircled{a} & \textcircled{b} \\
                                                                                             CW & \textcircled{c} & \textcircled{d}\\
                                                                                             \hline
\end{tabular}
\end{table}\label{tab1}

In Theorem \ref{SOCCW-CCW} of this paper, the interconnection case $\textcircled{a}$ in Table \ref{tab1} is considered, where both the linear system $\calP$ and the hysteresis operator ${\bf \Phi}$ have CCW I/O dynamics. In particular, we give sufficient conditions on $\calP$ which are dependent on the underlying anhysteresis function of ${\bf \Phi}$ that ensure the stability of the closed-loop system with a positive-feedback interconnection. This result is motivated by recent results on the positive-feedback interconnection of negative imaginary system \cite{Petersen2010} and of CCW systems \cite{SCHAFT2011}. A motivating example for the interconnection case $\textcircled{a}$ is the piezo-actuated stages which are commonly used in the high-precision positioning mechanisms, see, for example \cite{CHIHJER2006}. The piezo-actuated stage contains two parts: a piezo-actuator and a positioning mechanism, which can be described by
\begin{equation}\label{Piezosystem}
\left.\begin{array}{rr}
\calP :&\begin{array}{rl} m\ddot{x}+b\dot{x}+kx & = F_{{\rm piezo}},\\
                            V&=cx,\end{array}\\[0.5cm]
{\bf \Phi}:&  F_{{\rm piezo}} =\Phi(V),\hspace{0.25cm}\end{array}\right\}
\end{equation}
where $m$ is the mass, $b$ is the damping constant, $k$ is the spring constant, $c$ is the proportional gain, $V$ is the input voltage of the piezo-actuator, $F_{{\rm piezo}}$ denotes the force generated by the piezo-actuator and $x$ denotes the displacement of the stage. The piezoelectric actuator has been shown to have CCW hysteresis loops from the input voltage to the output generated force (see, for example \cite{DRAGAN2005}). It can be checked that the linear mass-damper-spring system $\calP$ is also CCW from $F_{{\rm piezo}}$ to $x$ or, equivalently, $\calP$ is a negative-imaginary system \cite{Petersen2010}.

In Theorem \ref{SOCW-CCW}, we consider the interconnection case $\textcircled{b}$ in Table \ref{tab1}, where the linear system $\calP$ has CW I/O dynamics and the hysteresis operator ${\bf \Phi}$ has CCW I/O dynamics. In this case Theorem \ref{SOCW-CCW} provides sufficient conditions on $\calP$ which are independent of ${\bf \Phi}$ such that the closed-loop system with a negative feedback interconnection is stable. An example for this case is the active vibration mechanism using piezo-actuator, which has been used for vibration control in mechanical structures \cite{KAMADA1997}. The mechanism can be described by
\begin{equation}\label{RLCsystem}
\left.\begin{array}{rr}
\calP :&\begin{array}{rl} m\ddot{x}+b\dot{x}+kx & = F_{{\rm piezo}},\\
                            V&=-c\ddot{x},\end{array}\\ [0.5cm]
{\bf \Phi}:& F_{{\rm piezo}} =\Phi(V). \hspace{0.25cm} \end{array}\right\}
\end{equation}
As described before, the piezoelectric actuator has CCW I/O dynamics and it can be checked that the mass-damper-spring system $\calP$ is CW from $F_{{\rm piezo}}$ to $\ddot{x}$.

Theorem \ref{SOCCW-CW} deals with the interconnection case $\textcircled{c}$, where $\calP$ has CCW I/O dynamics and ${\bf \Phi}$ has CW I/O dynamics. A motivating example for this case is the mechanical systems with friction \cite{PADTHE2008}, which is given by
\begin{equation}\label{Fricsystem}
\left.\begin{array}{rrl}
\calP :& m\ddot{x}+kx & = - F_{{ \rm friction}},\\[0.5cm]
{\bf \Phi}:& F_{{ \rm friction}}& =\Phi(x),\end{array}\right\}
\end{equation}
where $F_{{ \rm friction}}$ is the friction force. As discussed in \cite{PADTHE2008}, the friction force has CW I/O dynamics where the input is the displacement. On the other hand, the mechanical system is CCW from the friction force $-F_{{ \rm friction}}$ to the displacement $x$.

As a completion to the Table \ref{tab1}, we present the analysis of the interconnection case $\textcircled{d}$ in Theorem \ref{CW-CW}. Based on these results, we present in Section \ref{control-design} a control design methodology for a linear plant with a hysteretic actuator/sensor ${\bf \Phi}$ and we provide two numerical examples in Section VII.


\section{preliminaries} \label{CCW-CW-Duhem}
In this section we give the definitions of the CCW and CW dynamics based on the work by Angeli \cite{ANGELI2006} and Padthe \cite{PADTHE2005}. Figure \ref{ccw-cw} illustrates the CCW and CW input-output dynamics of a (nonlinear) operator $G: u \mapsto G(u)=:y$. We denote $AC(\rline_+, \rline^n)$ the space of absolutely continuous function $f:\rline_+ \rightarrow \rline^n$.
\begin{figure}[h]
\centering \psfrag{u}{$u$}\psfrag{y}{$y$}
  \subfigure[]{\includegraphics[width=0.2\textwidth]{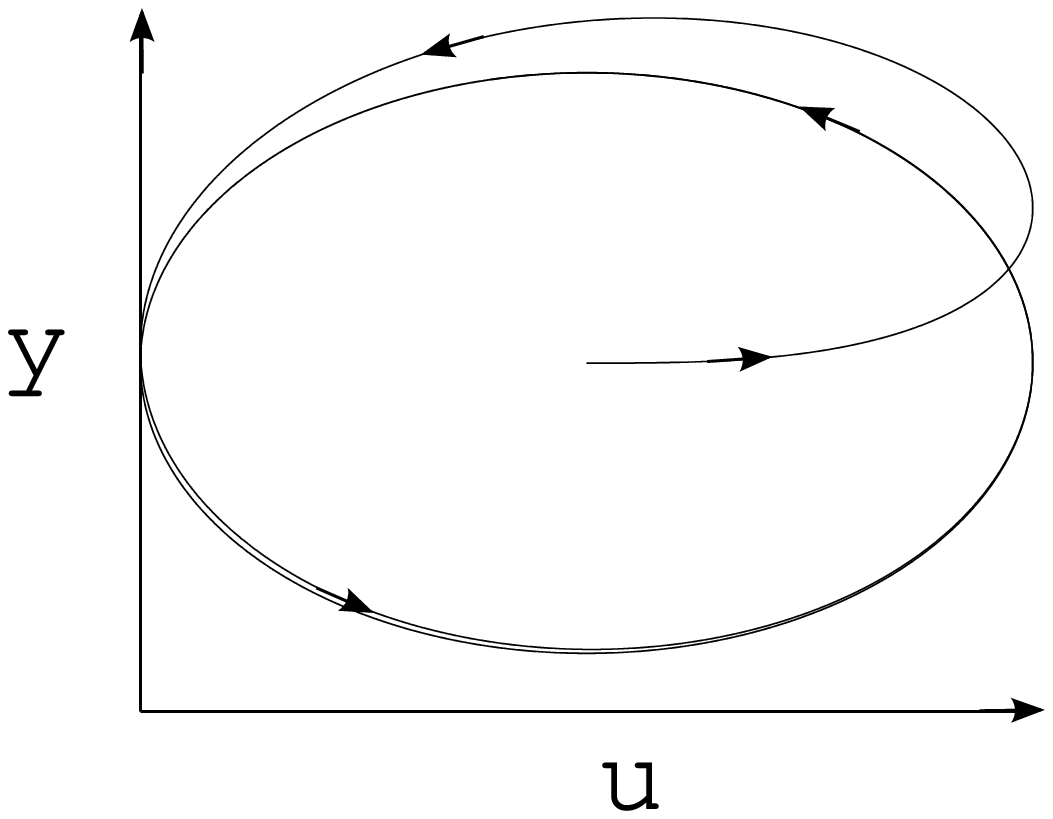}}
  \subfigure[]{\includegraphics[width=0.2\textwidth]{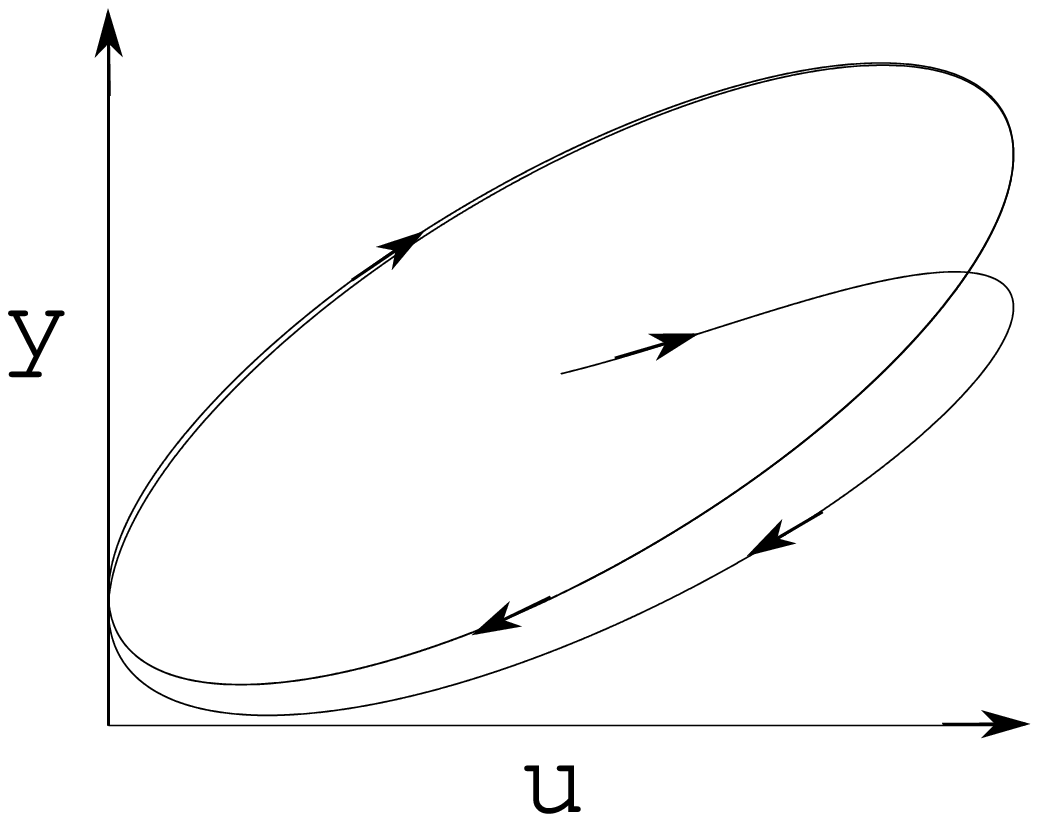}}
\caption{A graphical illustration of counter-clockwise (CCW) and clockwise (CW) I/O dynamics of an operator $G:u \longmapsto y$. $(a)$ CCW I/O dynamics; $(b)$ CW I/O dynamics.}
\label{ccw-cw} \vspace{0.2cm}
\end{figure}

\subsection{Counterclockwise dynamics}
\begin{definition} \cite{ANGELI2006, PADTHE2005}
 A (nonlinear) map $G:AC(\rline_+, \rline^m)\rightarrow AC(\rline_+, \rline^m)$ is {\it counterclockwise} (CCW) if for every $u \in AC(\rline_+, \rline^m)$ with the corresponding output map $y:=Gu$, the following inequality holds
\begin{equation}\label{deCCW}
\liminf_{T\rightarrow \infty}\int^T_0{\langle \dot{y}(t), u(t)\rangle dt > -\infty}.
\end{equation}
\end{definition}
\vspace{0.2cm}
For an operator $G$, inequality \rfb{deCCW} holds if there exists a function $V: \rline^2 \rightarrow \rline_+$ such that for every input signal $u$, the inequality
\begin{equation}
\frac{\dd V(y(t),u(t))}{\dd t} \leq \langle
\dot{y}(t), u(t)\rangle,
\end{equation}
holds for almost every $t$ where the output signal $y := Gu$.

\begin{definition}\label{def_soccw}
A (nonlinear) map $G:AC(\rline_+, \rline^m)\rightarrow AC(\rline_+, \rline^m)$ is {\it strictly counterclockwise} (S-CCW) (see also \cite{ANGELI2006}), if for every input $u \in AC(\rline_+, \rline^m)$, there exists a constant $\varepsilon> 0$ such that the inequality
 \begin{equation}
\liminf_{T\rightarrow \infty}\int^T_0{\langle \dot{y}(t),\ u(t)\rangle -\varepsilon \|\dot{y}(t)\|^2\dd t > -\infty},
\end{equation}
holds where $y := Gu$.
\end{definition}
Note that for systems described by the state space representation as follows:
\begin{equation}\label{statespace}
\Sigma: \left. \begin{array}{rl}
         \dot{x}&=f(x,u), \qquad x(0)=x_0\\
          y&=h(x),
        \end{array} \right\}
\end{equation}
where $x(t) \in \rline^n$ is the state, $u(t) \in \rline^m$ is the input, $y(t) \in \rline^m$ is the output and $f$, $h$ are sufficiently smooth functions, the following lemma provides sufficient conditions for $\Sigma$ to be CCW (and S-CCW).
\begin{lemma}\label{lemma1}
Consider the state space system $\Sigma$ as in \rfb{statespace}. If there exists $V :\rline^n \rightarrow \rline_+$ and $\varepsilon \geq 0$, such that
\begin{equation*}
\frac{\partial V(x)}{\partial x}f(x,u) \leq \left\langle \frac{\partial h(x)}{\partial x}f(x,u), u\right\rangle-\varepsilon \left\| \frac{\partial h(x)}{\partial x}f(x,u)\right\|^2,
\end{equation*}
holds for all $x\in\rline^n$ and $u\in\rline^m$, then $\Sigma$ is CCW. Moreover if $\varepsilon > 0$, it is S-CCW.
\end{lemma}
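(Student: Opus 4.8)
The plan is to show that the inequality on $\frac{\partial V(x)}{\partial x}f(x,u)$ forces the integral in the definition of CCW (Definition following \rfb{deCCW}) to be bounded below. First I would fix an arbitrary input $u \in AC(\rline_+,\rline^m)$ and let $x(\cdot)$ be the corresponding state trajectory of $\Sigma$, so that $y(t)=h(x(t))$ is absolutely continuous and, by the chain rule, $\dot{y}(t)=\frac{\partial h(x(t))}{\partial x}f(x(t),u(t))$ for almost every $t$. Likewise, since $V$ is (sufficiently smooth and) evaluated along an absolutely continuous trajectory, $t \mapsto V(x(t))$ is absolutely continuous with $\frac{\dd}{\dd t}V(x(t)) = \frac{\partial V(x(t))}{\partial x}f(x(t),u(t))$ almost everywhere.

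Next I would substitute the hypothesis pointwise along the trajectory: for almost every $t$,
\begin{equation*}
\frac{\dd}{\dd t}V(x(t)) \;\leq\; \langle \dot{y}(t), u(t)\rangle - \varepsilon\|\dot{y}(t)\|^2 \;\leq\; \langle \dot{y}(t), u(t)\rangle,
\end{equation*}
where the second inequality uses $\varepsilon \geq 0$. Integrating from $0$ to $T$ and using absolute continuity (fundamental theorem of calculus for the Lebesgue integral) gives
\begin{equation*}
\int_0^T \langle \dot{y}(t), u(t)\rangle \,\dd t \;\geq\; V(x(T)) - V(x(0)) \;\geq\; -V(x_0),
\end{equation*}
since $V \geq 0$. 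Hence $\liminf_{T\to\infty}\int_0^T \langle \dot{y}(t),u(t)\rangle\,\dd t \geq -V(x_0) > -\infty$, which is exactly \rfb{deCCW}, so $\Sigma$ is CCW. For the S-CCW claim when $\varepsilon>0$, I would instead keep the $\varepsilon\|\dot y\|^2$ term: integrating $\frac{\dd}{\dd t}V(x(t)) \leq \langle\dot y(t),u(t)\rangle - \varepsilon\|\dot y(t)\|^2$ yields $\int_0^T \big(\langle\dot y(t),u(t)\rangle - \varepsilon\|\dot y(t)\|^2\big)\dd t \geq -V(x_0)$, giving the bound in Definition \ref{def_soccw}.

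The only real subtlety — and the step I would flag as the main obstacle — is the regularity bookkeeping that legitimizes applying the fundamental theorem of calculus: one needs $t\mapsto V(x(t))$ and $t\mapsto y(t)$ to be absolutely continuous so that their derivatives integrate back to differences of endpoint values. This is where the standing smoothness assumptions on $f$ and $h$ (and implicitly on $V$, or at least local Lipschitz continuity of $V$) together with $u \in AC(\rline_+,\rline^m)$ are used; on compact time intervals $x(\cdot)$ is absolutely continuous with essentially bounded derivative, and composition with a locally Lipschitz $V$ preserves absolute continuity. Everything else is a direct pointwise substitution of the hypothesis followed by monotone integration, so once the absolute-continuity justification is in place the proof is immediate.
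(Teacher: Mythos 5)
Your proof is correct and follows exactly the route the paper relies on: the paper states this lemma without proof, but the argument you give (differentiate $V$ along trajectories, substitute the hypothesis, integrate, and use $V\geq 0$ to bound the integral below by $-V(x_0)$) is precisely the mechanism the paper uses in the remark preceding the lemma and in the proof of Theorem \ref{CCW_hys}. Your flagged regularity point (absolute continuity of $t\mapsto V(x(t))$ and $t\mapsto y(t)$ so that the fundamental theorem of calculus applies) is the right one and is covered by the paper's standing smoothness assumptions on $f$, $h$ and the differentiability of $V$ implicit in writing $\partial V/\partial x$.
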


\subsection{Clockwise dynamics}
Dual to the concept of counterclockwise I/O dynamics, the notion of clockwise I/O dynamics can be defined as follows.

\begin{definition}\cite{PADTHE2005}\label{de-CW}
A (nonlinear) map $G:AC(\rline_+, \rline^m)\rightarrow AC(\rline_+, \rline^m)$ is {\it clockwise} (CW) if for every input $u \in AC(\rline_+, \rline^m)$ with the corresponding output map $y:=Gu$, the following inequality holds:
\begin{equation}\label{defCW}
\liminf_{T\rightarrow \infty}\int^T_0{y(t)^T\dot{u}(t)\dd t > -\infty}.
\end{equation}
\end{definition}

\vspace{0.2cm}
For a nonlinear operator $G$, inequality \rfb{defCW} holds if there exists a function $V: \rline^2 \rightarrow \rline_+$ such that for every input signal $u\in AC(\rline_+, \rline^m)$, the inequality
\begin{equation}
\frac{\dd V(y(t),u(t))}{\dd t} \leq \langle
y(t), \dot{u}(t)\rangle,
\end{equation}
holds for a.e. $t$ where the output signal $y := Gu$.
\begin{lemma}
Consider the state space system $\Sigma$ as in \rfb{statespace}. If there exist $\alpha, V:\rline^{m+n} \rightarrow \rline_+$, such that $V$ is positive definite and proper, and
\begin{equation}\label{lyaCW}
\left[\begin{array}{cc}
  \frac{\partial V(w,x)}{\partial w} & \frac{\partial V(w,x)}{\partial x}
\end{array}\right]\left[ \begin{array}{c}
               q \\
               f(x,w)
             \end{array}\right] \leq \langle h(x), w\rangle - \alpha(w,x) ,
\end{equation}
holds for all $x\in\rline^n$, $w\in\rline^m$ and $q\in\rline^m$, then $\Sigma$ is CW.
\end{lemma}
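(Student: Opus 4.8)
The plan is to run the same dissipation-inequality argument that underlies the CCW state-space test in Lemma~\ref{lemma1}, the only structural difference being that here the storage function $V$ is regarded as a function of the pair (input, state) $\in\rline^{m}\times\rline^{n}$ rather than of the state alone. Fix an arbitrary $u\in AC(\rline_+,\rline^m)$, let $x$ be the corresponding solution of $\dot x=f(x,u)$, $x(0)=x_0$, and set $y:=h(x)$. Since $u$ is absolutely continuous and $f,h$ are sufficiently smooth, $x$ is locally absolutely continuous with $\dot x(t)=f(x(t),u(t))$ for a.e.\ $t$, and $y=h(x)$ is locally absolutely continuous, so the I/O map $u\mapsto y$ sends $AC(\rline_+,\rline^m)$ into itself. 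By Definition~\ref{de-CW} it then suffices to bound $\int_0^T\langle y(t),\dot u(t)\rangle\,\dd t$ from below by a constant independent of $T$.

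The key object is the scalar map $t\mapsto V(u(t),x(t))$. On any compact interval $[0,T]$ the curve $t\mapsto(u(t),x(t))$ is absolutely continuous and stays in a compact set, and $V$ is $C^1$ hence locally Lipschitz there, so $t\mapsto V(u(t),x(t))$ is absolutely continuous on $[0,T]$ and the chain rule yields, for a.e.\ $t$,
\[
\frac{\dd}{\dd t}V\bigl(u(t),x(t)\bigr)=\frac{\partial V(u(t),x(t))}{\partial w}\,\dot u(t)+\frac{\partial V(u(t),x(t))}{\partial x}\,f\bigl(x(t),u(t)\bigr).
\]
Now apply \rfb{lyaCW} pointwise with $w=u(t)$, $x=x(t)$ and $q=\dot u(t)$; the point of requiring \rfb{lyaCW} for \emph{every} $q\in\rline^m$ is exactly that it lets us insert the (possibly unbounded) signal $\dot u(t)$ here. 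Recognising $h(x(t))=y(t)$, this gives the supply-rate inequality $\frac{\dd}{\dd t}V(u(t),x(t))\le\langle y(t),\dot u(t)\rangle-\alpha(u(t),x(t))$ for a.e.\ $t$. Integrating over $[0,T]$ and using $V\ge 0$ and $\alpha\ge 0$ (only nonnegativity of $V$ is needed here; positive-definiteness and properness are used by the stability theorems that follow),
\[
\int_0^T\langle y(t),\dot u(t)\rangle\,\dd t\ \ge\ V\bigl(u(T),x(T)\bigr)-V\bigl(u(0),x_0\bigr)+\int_0^T\alpha\bigl(u(t),x(t)\bigr)\,\dd t\ \ge\ -V\bigl(u(0),x_0\bigr).
\]
The right-hand side is finite and independent of $T$, so $\liminf_{T\to\infty}\int_0^T\langle y(t),\dot u(t)\rangle\,\dd t\ge -V(u(0),x_0)>-\infty$; since $u$ was arbitrary, $\Sigma$ is CW.

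The one genuinely delicate step is the regularity claim in the second paragraph: because admissible inputs are only absolutely continuous (not $C^1$), one must verify that $t\mapsto V(u(t),x(t))$ is absolutely continuous so that the chain rule and the fundamental theorem of calculus are legitimate, and that $\dot u$ enters the balance solely through the Lebesgue integral $\int_0^T\langle y,\dot u\rangle\,\dd t$; with $V\in C^1$ this is routine but should be made explicit. A minor additional remark: since \rfb{lyaCW} must hold for all $q\in\rline^m$ while $q$ appears linearly on the left through $\tfrac{\partial V}{\partial w}q$, the hypothesis forces a structural identity relating $\tfrac{\partial V}{\partial w}$ to $h$; one could reorganise the argument by extracting that identity first, but the direct substitution $q=\dot u(t)$ above is the shortest route and mirrors the CCW proof step for step.
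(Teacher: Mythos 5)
Your proof is correct and follows essentially the same route as the paper's: the paper formally introduces the extended system $\dot w=q$, $\dot x=f(x,w)$ and then sets $w=u$, which is exactly your substitution $q=\dot u(t)$ into \rfb{lyaCW} followed by integration of the resulting dissipation inequality. Your version merely makes explicit the chain-rule/regularity step and the final integration that the paper leaves implicit.
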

\begin{proof}
Define the extended state space system \rfb{statespace} as follows
\begin{equation}\label{exstatespace}
 \left.\begin{array}{rl}
         \dot{w} &=q,\\
         \dot{x}&=f(x,w), \\
          y&=h(x).
        \end{array}\right.
\end{equation}
Note that $w$ defines the input in \rfb{statespace}. It follows from \rfb{lyaCW} and \rfb{exstatespace} that
\begin{align*}
\dot{V} &\leq \langle h(x), q\rangle - \alpha(x,w), \\
        &= \langle y, \dot{w}\rangle - \alpha(x,w),
\end{align*}
which completes our proof by taking $w=u$.
\end{proof}
\vspace{0.2cm}
\section{Duhem Hysteresis operator}\label{Duhem}
The Duhem operator $\Phi:AC(\rline_+)\times \R\to AC(\rline_+), (u_{\Phi},y_{\Phi_0})\mapsto \Phi(u_{\Phi},y_{\Phi_0})=:y_{\Phi}$ is described by
\begin{equation}\label{babuskamodel}
\dot y_{\Phi}(t) = f_1(y_{\Phi}(t),u_{\Phi}(t))\dot u_{\Phi+}(t) + f_2(y_{\Phi}(t),u_{\Phi}(t))\dot u_{\Phi-}(t), \ y_{\Phi}(0)=y_{\Phi_0},
\end{equation}
where $\dot u_{\Phi+}(t):=\max\{0,\dot u_{\Phi}(t)\}$, $\dot u_{\Phi-}(t):=\min\{0,\dot u_{\Phi}(t)\}$ and
$f_1:\R^2\to\R$, $f_2:\R^2\to\R$ are $C^1$. We refer to \cite{MACKI1993, OH2005, VISINTIN1994} for standard properties of the Duhem operator, such as causality, monotonicity and rate-independency.

The existence of solutions to \rfb{babuskamodel} has been reviewed in \cite{MACKI1993}. In particular, if for every $v \in \rline $, the functions $f_1$ and $f_2$ satisfy
\begin{align}\label{exis_cond}
(\gamma_1 - \gamma_2)[f_1(\gamma_1,v) - f_1(\gamma_2,v)] & \leq \lambda_1(v)(\gamma_1 - \gamma_2)^2,\\\nonumber
(\gamma_1 - \gamma_2)[f_2(\gamma_1,v) - f_2(\gamma_2,v)] & \geq -\lambda_2(v)(\gamma_1 - \gamma_2)^2,
\end{align}
for all $\gamma_1$, $\gamma_2 \in \rline$, where $\lambda_1$ and $\lambda_2$ are nonnegative, then \rfb{babuskamodel} has a unique global solution and $\Phi$ maps $AC(\rline_+) \times \rline \rightarrow AC(\rline_+)$.

\subsection{Duhem operator with CCW characterization}\label{DuhemCCW}
To show the CCW properties of the Duhem operator, we review our previous results in \cite{JAYAWARDHANAAUTOMATICA2012}. In \cite{JAYAWARDHANAAUTOMATICA2012}, we define a function $H_{\circlearrowleft}:\R^2 \to \rline_+$ for the Duhem operator $\Phi$ such that \rfb{passivehysteresis} holds (under certain conditions on $f_1$ and $f_2$).
Before we can define the function $H_{\circlearrowleft}$ for $\Phi$, we need to define three functions which depend on $f_1$ and $f_2$.

Firstly, we define a traversing function $\omega_{\Phi}$ which describes the possible trajectory of $\Phi$ when a monotone increasing $u_{\Phi}$ and a monotone decreasing $u_{\Phi}$ is applied to $\Phi$ from an initial condition.

For every pair $(y_{\Phi_0},u_{\Phi_0})\in \rline^2$, let
$\omega_{\Phi,1}(\cdot,y_{\Phi_0},u_{\Phi_0}):[u_{\Phi_0},\infty)\to\rline$ be the
solution of
\begin{equation*}
z(v) - y_{\Phi_0} = \int^v_{u_{\Phi_0}}{f_1(z(\sigma),\sigma)\
\dd\sigma}, \quad \forall v\in[u_{\Phi_0},\infty),
\end{equation*}
and let $\omega_{\Phi,2}(\cdot,y_{\Phi_0},u_{\Phi_0}):(-\infty,u_{\Phi_0}]\to\rline$ be
the solution of
\begin{equation*}
z(v) - y_{\Phi_0} = \int_{u_{\Phi_0}}^v{f_2(z(\sigma),\sigma)\
\dd\sigma},\quad \forall v\in (-\infty,u_{\Phi_0}].
\end{equation*}
Using the above definitions, for every pair $(y_{\Phi_0},u_{\Phi_0})\in \rline^2$, the {\it traversing function}
$\omega_\Phi(\cdot,y_{\Phi_0},u_{\Phi_0}):\rline\to\rline$ is defined by the
concatenation of $\omega_{\Phi,2}(\cdot,y_{\Phi_0},u_{\Phi_0})$ and
$\omega_{\Phi,1}(\cdot,y_{\Phi_0},u_{\Phi_0})$:
\begin{equation}\label{wcurve}
\omega_\Phi(v,y_{\Phi_0},u_{\Phi_0}) = \left\{ \begin{array}{ll}
\omega_{\Phi,2}(v,y_{\Phi_0},u_{\Phi_0}) & \forall v \in (-\infty,u_{\Phi_0}) \\
\omega_{\Phi,1}(v,y_{\Phi_0},u_{\Phi_0}) & \forall v \in [u_{\Phi_0},\infty).
\end{array} \right.
\end{equation}
Again, we remark that the curve $\omega_\Phi(\cdot,y_{\Phi_0},u_{\Phi_0})$ is the (unique) hysteresis
curve where the curve defined in $(-\infty, u_{\Phi_0}]$ is obtained by
applying a monotone decreasing $u_{\Phi}\in AC(\rline_+, \rline^m)$ to $\Phi(u_{\Phi},y_{\Phi_0})$ with
$u_{\Phi}(0)=u_{\Phi_0}$ and $\lim_{t\to\infty}u_{\Phi}(t)=-\infty$ and,
similarly, the curve defined in $[u_{\Phi_0},\infty)$ is produced by
introducing a monotone increasing $u_{\Phi}\in AC(\rline_+, \rline^m)$ to $\Phi(u_{\Phi},y_{\Phi_0})$
with $u_{\Phi}(0)=u_{\Phi_0}$ and $\lim_{t\to\infty}u_{\Phi}(t)=\infty$.

The second function we need to define is the {\it anhysteresis function} $f_{an}$, which represents the curve where $f_1(f_{an}(v),v)=f_2(f_{an}(v),v)$.

Another function that is needed for defining $H_{\circlearrowleft}$ is the intersecting function between the anhysteresis function $f_{an}$ and the function $\omega_{\Phi}$ as defined above. The function $\Omega:\rline^2\rightarrow \rline$ is the {\it CCW intersecting function} if $\omega_\Phi(\Omega(\gamma,v),\gamma,v)=f_{an}(\Omega(\gamma,v))$ for all $(\gamma, v)\in\rline^2$ and $\Omega(\gamma,v)\geq v$ whenever $\gamma \geq f_{an}(v)$ and $\Omega(\gamma,v)<v$ otherwise. For simplicity, we assume that $\Omega$ is differentiable. In \cite[Lemma 3.1]{JAYAWARDHANAAUTOMATICA2012} sufficient conditions on $f_1$ and $f_2$ which guarantee the existence of such $\Omega$ are $f_{an}$ be monotone increasing and
\begin{eqnarray}
\label{eq_ass_Omega_11}
f_1(\gamma,v) <  \frac{\dd f_{an}(v)}{\dd v}-\epsilon & \textrm{ whenever }& \gamma > f_{an}(v)\ \\
\label{eq_ass_Omega_12}
f_2(\gamma,v) <  \frac{\dd f_{an}(v)}{\dd v}-\epsilon & \textrm{ whenever }& \gamma < f_{an}(v)\
\end{eqnarray}
hold with $\epsilon>0$.

\begin{theorem}\label{CCW_hys}
Consider the Duhem hysteresis operator $\Phi$ defined in \rfb{babuskamodel} with $C^1$ functions $f_1, f_2:\rline^2 \to \rline_+$. Let $f_{an}$ be the corresponding anhysteresis function which is monotone increasing and satisfies \rfb{eq_ass_Omega_11} and \rfb{eq_ass_Omega_12}. Denote by $\Omega$ the corresponding CCW intersecting function. Suppose that for all $(\gamma,v)$ in $\rline^2$, $f_1(\gamma,v)\geq f_2(\gamma,v)$ whenever $\gamma \leq f_{an}(v)$ and $f_1(\gamma,v) < f_2(\gamma,v)$ otherwise. Then $\Phi$ is CCW with the function $H_{\circlearrowleft}:\rline^2\rightarrow \rline_+$ be given by
\begin{equation}\label{storage}
H_{\circlearrowleft}(\gamma,v) = \gamma v -\int_0^v{\omega_\Phi(\sigma,\gamma,v) \ \dd\sigma} + \int_0^{\Omega(\gamma,v)}{\omega_\Phi(\sigma,\gamma,v) - f_{an}(\sigma) \ \dd\sigma}.
\end{equation}
\end{theorem}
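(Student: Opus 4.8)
\emph{Proof strategy.} The plan is to show that $H_{\circlearrowleft}$ in \rfb{storage} is a genuine storage function certifying the CCW property: namely, that $H_{\circlearrowleft}\geq0$, and that along every pair $(y_\Phi,u_\Phi)$ with $u_\Phi\in AC(\rline_+)$ and $y_\Phi=\Phi(u_\Phi,y_{\Phi_0})$ one has $\frac{\dd}{\dd t}H_{\circlearrowleft}(y_\Phi(t),u_\Phi(t))\leq\dot y_\Phi(t)u_\Phi(t)$ for a.e.\ $t$; by the sufficient condition for CCW recalled in Section~\ref{CCW-CW-Duhem} this delivers \rfb{passivehysteresis} and hence the claim. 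Write $\gamma:=y_\Phi(t)$, $v:=u_\Phi(t)$, $\Omega:=\Omega(\gamma,v)$. Two elementary facts about the traversing curves will be used repeatedly: first, $\omega_\Phi(v,\gamma,v)=\gamma$, and by \rfb{wcurve} the restriction of $\omega_\Phi(\cdot,\gamma,v)$ to the interval with endpoints $v$ and $\Omega$ coincides with $\omega_{\Phi,1}$ when $\Omega\geq v$ (equivalently $\gamma\geq f_{an}(v)$) and with $\omega_{\Phi,2}$ when $\Omega<v$; second, differentiating $z'=f_i(z,\sigma)$, $z(v)=\gamma$ with respect to the data gives the variational identities $\partial_\gamma\omega_{\Phi,i}(\sigma,\gamma,v)>0$ and $\partial_v\omega_{\Phi,i}(\sigma,\gamma,v)=-f_i(\gamma,v)\,\partial_\gamma\omega_{\Phi,i}(\sigma,\gamma,v)$.

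\emph{Dissipation inequality.} Differentiating \rfb{storage} along a trajectory by the chain rule, the term produced by the moving upper limit of the last integral is $\bigl(\omega_\Phi(\Omega,\gamma,v)-f_{an}(\Omega)\bigr)\frac{\dd}{\dd t}\Omega$, and this vanishes because $\omega_\Phi(\Omega(\gamma,v),\gamma,v)=f_{an}(\Omega(\gamma,v))$ by the very definition of the CCW intersecting function; using $\omega_\Phi(v,\gamma,v)=\gamma$ to cancel the $\gamma\dot v$ contributions, the derivative collapses to
\[
\frac{\dd}{\dd t}H_{\circlearrowleft}(y_\Phi,u_\Phi)=\dot\gamma\,v+\dot\gamma\int_v^{\Omega}\partial_\gamma\omega_\Phi(\sigma,\gamma,v)\,\dd\sigma+\dot v\int_v^{\Omega}\partial_v\omega_\Phi(\sigma,\gamma,v)\,\dd\sigma .
\]
Since $\dot\gamma v=\dot y_\Phi u_\Phi$, it remains to show the sum of the two integrals is $\leq0$. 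Substituting the variational identities, on the interval between $v$ and $\Omega$ the common integrand equals $\partial_\gamma\omega_{\Phi,i}(\sigma,\gamma,v)\bigl(\dot\gamma-f_i(\gamma,v)\dot v\bigr)$ with $\partial_\gamma\omega_{\Phi,i}>0$, where $i=1$ if $\Omega\geq v$ and $i=2$ if $\Omega<v$; moreover the scalar $\dot\gamma-f_i(\gamma,v)\dot v$ does not depend on $\sigma$, so the combined integral has the sign of $\bigl(\dot\gamma-f_i(\gamma,v)\dot v\bigr)\cdot\mathrm{sign}(\Omega-v)$. Now split into four cases by the sign of $\dot u_\Phi$ and the position of $\gamma$ relative to $f_{an}(v)$: when $\dot u_\Phi\geq0$ one has $\dot\gamma=f_1(\gamma,v)\dot v$ and when $\dot u_\Phi\leq0$ one has $\dot\gamma=f_2(\gamma,v)\dot v$. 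If $i=1$ and $\dot u_\Phi\geq0$, or $i=2$ and $\dot u_\Phi\leq0$, then $\dot\gamma-f_i(\gamma,v)\dot v=0$; otherwise it equals $\bigl(f_1(\gamma,v)-f_2(\gamma,v)\bigr)\dot v$ up to sign, and since $i=1\Leftrightarrow\Omega\geq v\Leftrightarrow\gamma\geq f_{an}(v)$ while the standing hypothesis gives $f_1(\gamma,v)\geq f_2(\gamma,v)\Leftrightarrow\gamma\leq f_{an}(v)$, the sign of $\Omega-v$ always cancels against that of $\dot\gamma-f_i(\gamma,v)\dot v$, so the integral is $\leq0$ in every case. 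Hence $\frac{\dd}{\dd t}H_{\circlearrowleft}\leq\dot y_\Phi u_\Phi$ a.e.

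\emph{Nonnegativity.} On the anhysteresis curve $\gamma=f_{an}(v)$ one has $\Omega(\gamma,v)=v$, so \rfb{storage} reduces to $H_{\circlearrowleft}(f_{an}(v),v)=f_{an}(v)v-\int_0^v f_{an}(\sigma)\,\dd\sigma=\int_0^v\bigl(f_{an}(v)-f_{an}(\sigma)\bigr)\dd\sigma\geq0$ by monotonicity of $f_{an}$ (for either sign of $v$). For a general $(\gamma,v)$, rewrite $H_{\circlearrowleft}(\gamma,v)=\gamma v-\int_\Omega^v\omega_\Phi(\sigma,\gamma,v)\,\dd\sigma-\int_0^\Omega f_{an}(\sigma)\,\dd\sigma$; conditions \rfb{eq_ass_Omega_11}--\rfb{eq_ass_Omega_12} force the active branch ($\omega_{\Phi,1}$ lying above, or $\omega_{\Phi,2}$ lying below) to stay on the same side of $f_{an}$ as $(\gamma,v)$ on the interval between $v$ and $\Omega$, and combining this with the monotonicity of $f_{an}$ reduces $H_{\circlearrowleft}(\gamma,v)\geq0$, after a short case split on the signs of $v$ and of $\gamma-f_{an}(v)$, to the inequality just established on the anhysteresis curve.

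\emph{Anticipated main obstacle.} The delicate part is the bookkeeping in the dissipation step: one must simultaneously keep track of $\mathrm{sign}(\Omega-v)$, of which of $\omega_{\Phi,1},\omega_{\Phi,2}$ is the active branch between $v$ and $\Omega$, of the sign of $\dot u_\Phi$, and of the hypothesis tying $f_1-f_2$ to the sign of $\gamma-f_{an}(v)$, and verify that these line up to give nonpositivity of the correction term in all four configurations; the same kind of care is needed to make the positivity argument uniform in the sign of $v$. The cancellation of the $\Omega$-boundary term via $\omega_\Phi(\Omega,\gamma,v)=f_{an}(\Omega)$ and the variational identity for $\partial_v\omega_{\Phi,i}$ are the two structural ingredients that make the estimate go through.
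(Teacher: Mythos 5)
Your route is genuinely different from the paper's. The paper does not prove the dissipation inequality or the nonnegativity of $H_{\circlearrowleft}$ at all: it cites Lemma 3.1 and Theorem 3.3 of \cite{JAYAWARDHANAAUTOMATICA2012} for both, and only writes out the last step, namely integrating \rfb{storCCW} over $[0,T]$ and using $H_{\circlearrowleft}\geq 0$ to bound $\int_0^T \dot y_\Phi u_\Phi\,\dd\tau$ below by $-H_{\circlearrowleft}(y_\Phi(0),u_\Phi(0))$. You instead reconstruct the cited content from scratch. Your dissipation argument is essentially correct and is the right mechanism: the boundary term at the moving limit $\Omega$ vanishes because $\omega_\Phi(\Omega(\gamma,v),\gamma,v)=f_{an}(\Omega(\gamma,v))$, the sensitivity identity $\partial_v\omega_{\Phi,i}=-f_i(\gamma,v)\,\partial_\gamma\omega_{\Phi,i}$ collapses the correction to $\bigl(\dot\gamma-f_i(\gamma,v)\dot v\bigr)\int_v^{\Omega}\partial_\gamma\omega_{\Phi,i}\,\dd\sigma$, and your four-way case split correctly exploits the fact that the sign of $f_1-f_2$ is tied to the sign of $\gamma-f_{an}(v)$, which also fixes the sign of $\Omega-v$. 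What your self-contained route buys is visibility of where each hypothesis enters; what it costs is that you must also actually establish $H_{\circlearrowleft}\geq 0$, which the paper inherits from the citation and which its final step genuinely needs.

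That nonnegativity step is where your proposal has a real gap. Your claim that the general case ``reduces to the inequality just established on the anhysteresis curve'' does not work as stated: if the reduction is $H_{\circlearrowleft}(\gamma,v)\geq H_{\circlearrowleft}(f_{an}(\Omega),\Omega)$, integration by parts shows the difference equals $\int_{\Omega}^{v}\sigma\,\partial_\sigma\omega_\Phi(\sigma,\gamma,v)\,\dd\sigma$ with $\partial_\sigma\omega_\Phi=f_i(\omega_\Phi,\sigma)\geq 0$, and this is \emph{negative} whenever the interval between $\Omega$ and $v$ lies in the left half-line (e.g.\ $\Omega\leq v\leq 0$). Neither of the two terms in your decomposition is separately bounded below, so a genuine two-term estimate is unavoidable. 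A correct argument: write $H_{\circlearrowleft}(\gamma,v)=\bigl(\gamma v-F(v)\bigr)+\int_v^{\Omega}\bigl(\omega_\Phi(\sigma,\gamma,v)-f_{an}(\sigma)\bigr)\dd\sigma$ with $F(v):=\int_0^v f_{an}(\sigma)\dd\sigma$. Since $f_1,f_2\geq 0$ together with \rfb{eq_ass_Omega_11}--\rfb{eq_ass_Omega_12} force $f_{an}'>\epsilon$, the function $F$ is strictly convex and $f_{an}$ is invertible; putting $v^*:=f_{an}^{-1}(\gamma)$ one has $\gamma v-F(v)=F^*(\gamma)-\int_v^{v^*}\bigl(\gamma-f_{an}(\sigma)\bigr)\dd\sigma$ with $F^*(\gamma)=\gamma v^*-F(v^*)\geq 0$. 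Because $v^*$ lies between $v$ and $\Omega$ and $\omega_\Phi(\sigma,\gamma,v)$ lies on the far side of $\gamma$ from $f_{an}(\sigma)$ for $\sigma$ between $v$ and $v^*$, the second term dominates $\int_v^{v^*}\bigl(\gamma-f_{an}(\sigma)\bigr)\dd\sigma$, whence $H_{\circlearrowleft}(\gamma,v)\geq F^*(\gamma)\geq 0$. With this replacement your proof is complete; without it, the final liminf bound that defines the CCW property is not justified.
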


\begin{proof}
The proof follows from Lemma 3.1 and Theorem 3.3 in \cite{JAYAWARDHANAAUTOMATICA2012}. In particular, it is shown in \cite{JAYAWARDHANAAUTOMATICA2012} that
\begin{equation}\label{storCCW}
\frac{\dd H_{\circlearrowleft}(y_{\Phi}(t),u_{\Phi}(t))}{\dd t} \leq \langle \dot{y}_{\Phi}(t), u_{\Phi}(t)\rangle,
\end{equation}
where $y_{\Phi}:=\Phi(u_{\Phi},y_{\Phi_0})$ and $H_{\circlearrowleft}$ is non-negative.
By integrating \rfb{storCCW} from $0$ to $T$ we have
\begin{equation*}
H_{\circlearrowleft}\big(y_{\Phi}(T), u_{\Phi}(T)\big) - H_{\circlearrowleft}\big(y_{\Phi}(0), u_{\Phi}(0)\big)\\ = \int_0^T{\dot
y_{\Phi}(\tau) u_{\Phi}(\tau)\dd \tau}.
\end{equation*}
Since $H_{\circlearrowleft}$ is nonnegative then
\begin{equation*}
\int_0^T{\dot y_{\Phi}(\tau) u_{\Phi}(\tau)\dd \tau}  \geq  - H_{\circlearrowleft}(y_{\Phi}(0),u_{\Phi}(0))>-\infty.
\end{equation*}
\end{proof}

An example of the CCW hysteresis phenomenon is the magnetic hysteresis in ferromagnetic material, which has CCW behavior from the input (an applied electrical field) to the output (the magnetization). The magnetic hysteresis can be modeled by the Coleman-Hodgdon model \cite{COLEMAN1986} given by
\begin{equation}\label{coleman}
   \dot{y}_{\Phi}(t) = C_{\alpha} |\dot{u}_{\Phi}(t)|[f(u_{\Phi}(t))-y_{\Phi}(t)]+\dot{u}_{\Phi}(t)g(u_{\Phi}(t)),
\end{equation}
where $C_{\alpha}$ is a positive constant, $f: \rline \rightarrow \rline$ is a monotone
increasing $C^1$ function, such that $f(0)=0$ and $g$ is locally Lipschitz.
The Coleman-Hodgdon model in \rfb{coleman} can be rewritten into the form of \rfb{babuskamodel} where:
\begin{equation}\label{coleman_duhem}
 f_1(y_{\Phi},u_{\Phi})= C_{\alpha} [f(u_{\Phi})-y_{\Phi}]+g(u_{\Phi}), \ f_2(y_{\Phi},u_{\Phi})=-C_{\alpha} [f(u_{\Phi})-y_{\Phi}]+g(u_{\Phi})
\end{equation}
In this case, it has the same structure as in \rfb{babuskamodel} with $f_{an}=f$. Figure \ref{colemanhys} shows the behaviour of the Coleman-Hodgdon model using the functions $f$ and $g$ given by
\begin{equation}\label{coleman_ex}
f(u_{\Phi}) = bu_{\Phi}, \quad g(u_{\Phi})= a,
\end{equation}
where $b>0$ and $a>0$. It can be easily checked that for every $u_{\Phi}(t) \in \rline$, $f_1$ and $f_2$ satisfy \rfb{exis_cond}, i.e., for every $u_{\Phi}\in AC(\rline_+)$ and for every $y_{\Phi}(0) \in \rline$, the solution of \rfb{coleman_duhem} exists for all $t \in \rline_+$.
\begin{figure}[h]
\centering \psfrag{y}{$y(t)$}\psfrag{u}{$u(t)$}\psfrag{t}{$t$} \psfrag{yp}{$y_{\Phi}$}\psfrag{up}{$u_{\Phi}$}\psfrag{omg}{$\Omega(y_{\Phi},u_{\Phi})$} \psfrag{fan}{$f_{an}$}
  \subfigure[]{\includegraphics[width=2.4in]{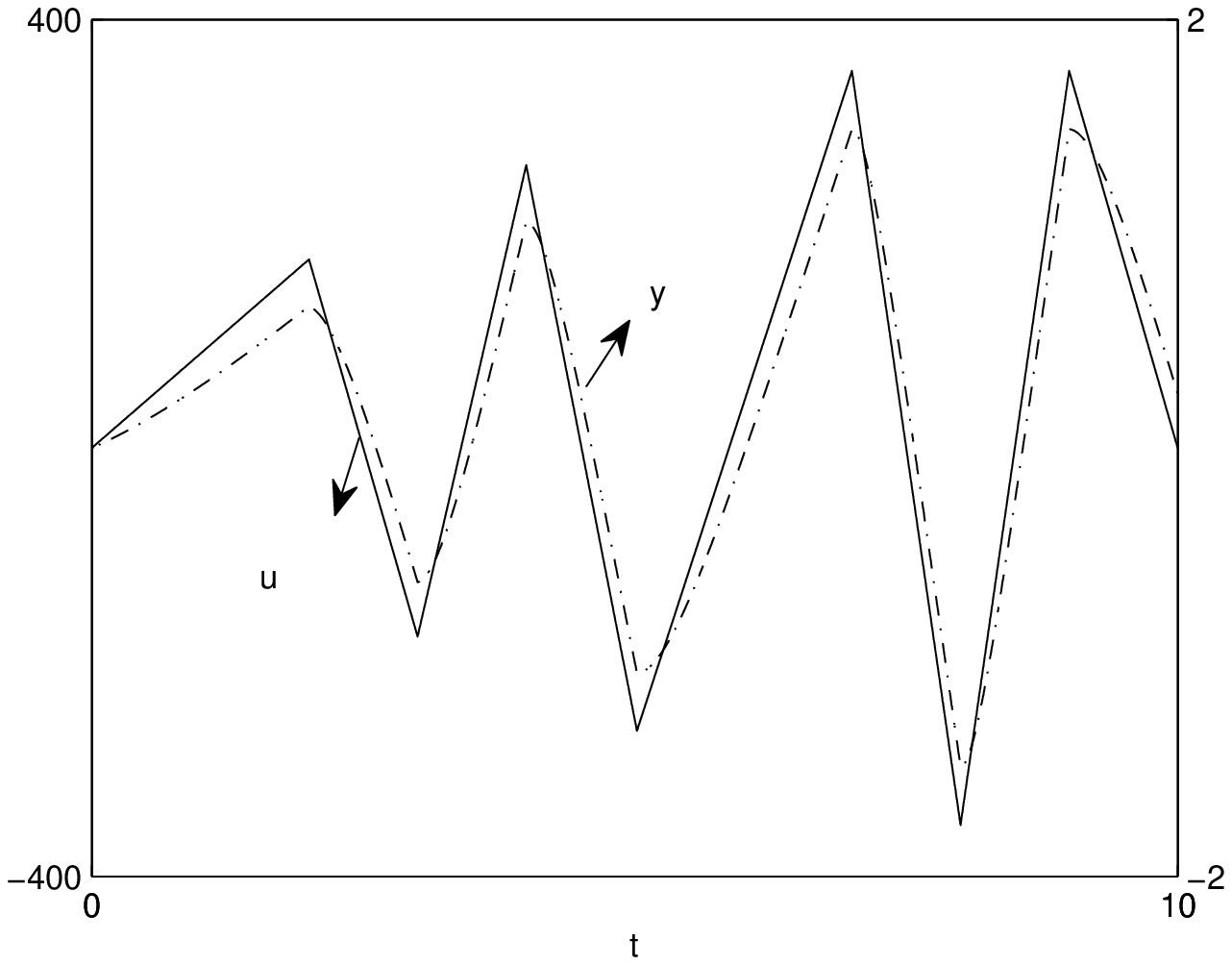}}
  \subfigure[]{\includegraphics[width=2.4in]{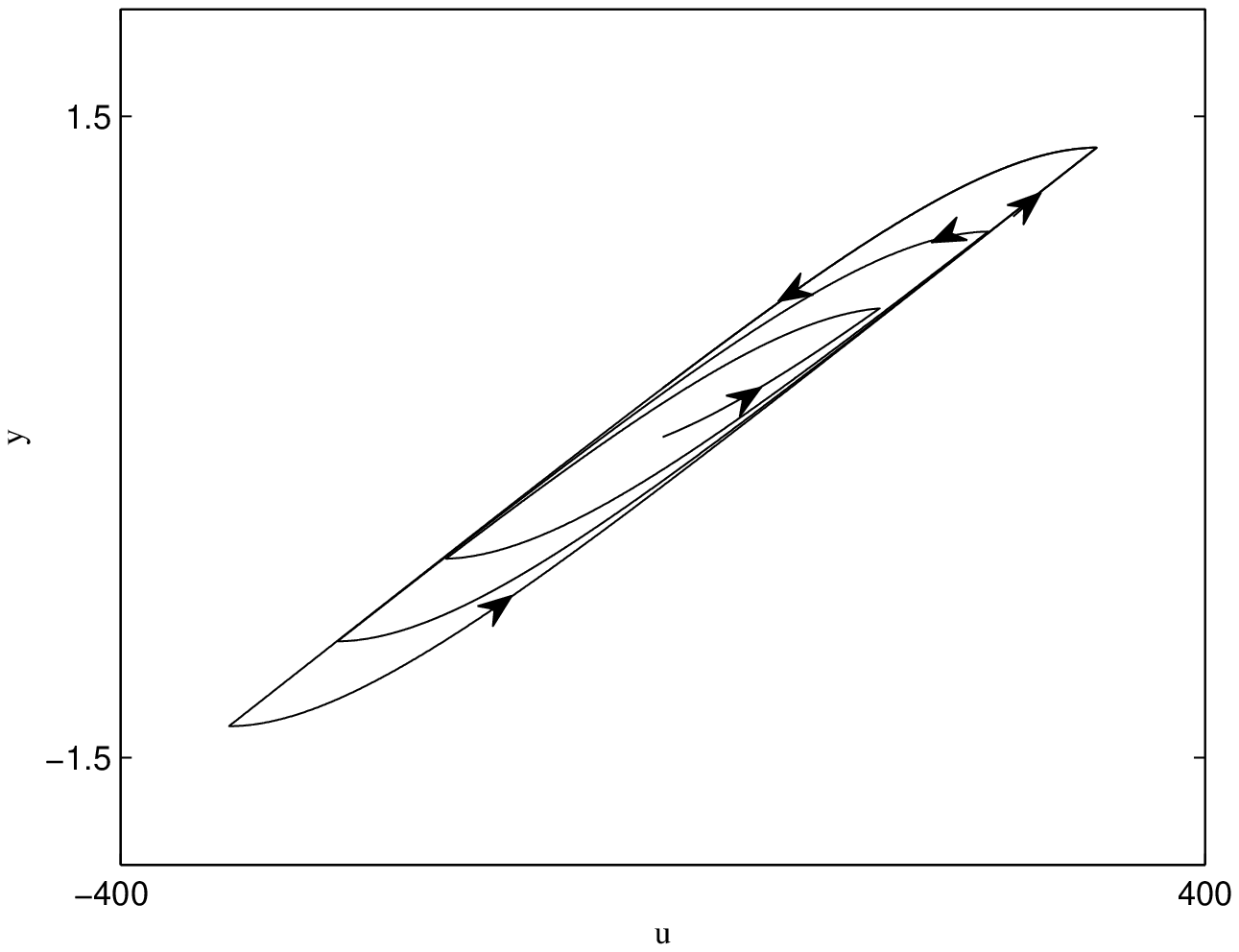}}
\caption{Behaviour of the Coleman-Hodgdon model using $f$ and $g$ as in \rfb{coleman_ex} with $b=5 \times 10^{-3}$, $C_{\alpha}=1 \times 10^{-2}$, $a=2.5 \times 10^{-3}$ and $y_{\Phi_0}=0$.}
\label{colemanhys} \vspace{0.2cm}
\end{figure}

Calculating the curve $\omega_{\Phi}$, we have
\begin{equation}
\omega_{\Phi}(\sigma,y_{\Phi}(t),u_{\Phi}(t))=\left \{ \begin{array}{ll} b\sigma+\frac{a-b}{C_{\alpha}}+(y_{\Phi}(t)-bu_{\Phi}(t)+\frac{b-a}{C_{\alpha}})\mathop{e}^{-C_{\alpha}(\sigma-u_{\Phi})}\ \ \sigma\in[u_{\Phi}(t),\ \infty),\\
b\sigma+\frac{b-a}{C_{\alpha}}+(y_{\Phi}(t)-bu_{\Phi}(t)+\frac{a-b}{C_{\alpha}})\mathop{e}^{C_{\alpha}(\sigma-u_{\Phi})} \ \ \sigma \in(-\infty,\ u_{\Phi}(t)]. \end{array} \right.
\end{equation}
The CCW intersecting function $\Omega(y_{\Phi}(t),u_{\Phi}(t))$ is given by
\begin{equation}\label{intersectionfun}
\Omega(y_{\Phi}(t),u_{\Phi}(t))=\left \{ \begin{array}{ll} u_{\Phi}(t)- \frac{1}{C_{\alpha}}{\rm ln} \left [\frac{\frac{b-a}{C_{\alpha}}}{y_{\Phi}(t)-bu_{\Phi}(t)+\frac{b-a}{C_{\alpha}}}\right ]\ y_{\Phi}(t) \geq f_{an}(u_{\Phi}(t)),\\
u_{\Phi}(t)+ \frac{1}{C_{\alpha}}{\rm ln}  \left [\frac{\frac{a-b}{C_{\alpha}}}{y_{\Phi}(t)-bu_{\Phi}(t)+\frac{a-b}{C_{\alpha}}}\right ] \ y_{\Phi}(t) < f_{an}(u_{\Phi}(t)). \end{array} \right.
\end{equation}
Since $f_1$ and $f_2$ satisfy the hypotheses in Theorem \ref{CCW_hys}, $\Phi$ is CCW. Denoting $u_{\Phi}^*(t)=\Omega(y_{\Phi}(t),u_{\Phi}(t))$, we can compute explicitly $H_{\circlearrowleft}$ in \rfb{storage} as follows

%
\begin{equation}\label{storageDahl}
H_{\circlearrowleft}(y_{\Phi}(t),u_{\Phi}(t))\\=\left \{ \begin{array}{ll} u_{\Phi}(t)y_{\Phi}(t)-\frac{1}{2}bu_{\Phi}(t)^2+\frac{a-b}{C_{\alpha}}(u_{\Phi}^*(t)-u_{\Phi}(t))\\
+\frac{1}{C_{\alpha}}(y_{\Phi}(t)-bu_{\Phi}(t)+\frac{b-a}{C_{\alpha}})(1-\mathop{e} ^{C_{\alpha}(u_{\Phi}(t)-u_{\Phi}^*(t))}) \quad y_{\Phi}(t) \geq f_{an}(u_{\Phi}(t)),\\
u_{\Phi}(t)y_{\Phi}(t)-\frac{1}{2}bu_{\Phi}(t)^2+\frac{b-a}{C_{\alpha}}(u_{\Phi}^*(t)-u_{\Phi}(t))\\
+\frac{1}{C_{\alpha}}(y_{\Phi}(t)-bu_{\Phi}(t)+\frac{a-b}{C_{\alpha}})(\mathop{e}^{C_{\alpha}(u_{\Phi}^*(t)-u_{\Phi}(t))}-1) \quad y_{\Phi}(t) \leq f_{an}(u_{\Phi}(t)). \end{array} \right.
\end{equation}

The graphical interpretation of $H_{\circlearrowleft}$ is shown in Figure \ref{colemanhys_stor}, where the value of $H_{\circlearrowleft}$ at a given time $t$ is given by the area in grey.
\begin{figure}[h]
\centering \psfrag{H}{$H_{\circlearrowleft}(y_{\Phi}(t),u_{\Phi}(t))$}\psfrag{u}{$u(t)$}\psfrag{t}{$t$} \psfrag{yp}{$y_{\Phi}(t)$}\psfrag{up}{$u_{\Phi}(t)$}\psfrag{omg}{$\Omega(y_{\Phi}(t),u_{\Phi}(t))$} \psfrag{fan}{$f_{an}$} \includegraphics[width=2.8in]{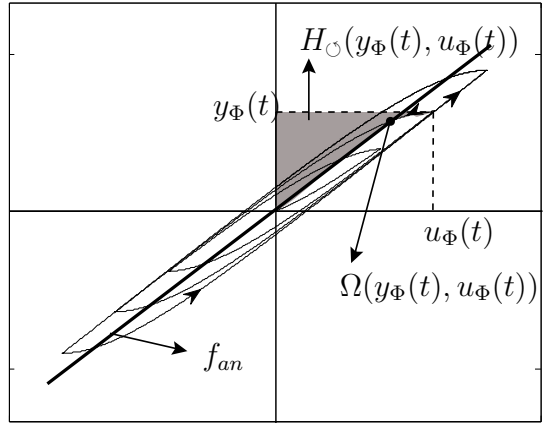}
\caption{Graphical interpretation of the function $H_{\circlearrowleft}(y_{\Phi}(t),u_{\Phi}(t))$ of the Coleman-Hodgdon model using $f$ and $g$ as in \rfb{coleman_ex} with $b=5 \times 10^{-3}$, $C_{\alpha}=1 \times 10^{-2}$, $a=2.5 \times 10^{-3}$ and $y_{\Phi_0}=0$.}
\label{colemanhys_stor}
\end{figure} \vspace{0.2cm}

\begin{proposition}
Consider the Duhem operator $\Phi$ satisfying the hypotheses in Theorem \ref{CCW_hys}. Suppose that $f_{an}(0)=0$. Then the function $H_{\circlearrowleft}(\cdot,v)$ (where $H_{\circlearrowleft}$ is as in \rfb{storage}) is radially unbounded for every $v$.
\end{proposition}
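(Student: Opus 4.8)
The plan is to bound $H_{\circlearrowleft}(\gamma,v)$ from below by a quantity that does \emph{not} depend on $v$ and that diverges as $|\gamma|\to\infty$; the target inequality is $H_{\circlearrowleft}(\gamma,v)\ge\int_0^\gamma f_{an}^{-1}(y)\,\dd y$. I would start from two preliminary facts. First, since $f_1,f_2\ge 0$ (part of the hypotheses of Theorem~\ref{CCW_hys}) and \rfb{eq_ass_Omega_11} holds with $\epsilon>0$, one has $\dd f_{an}(v)/\dd v>\epsilon$ for all $v$; moreover, as the intersecting function $\Omega$ is defined on all of $\rline^2$ while for $\gamma>f_{an}(v)$ the branch $\omega_{\Phi,1}(\cdot,\gamma,v)$ is nondecreasing (its $\sigma$-derivative equals $f_1\ge 0$) and starts from $\gamma$, we get $f_{an}(\Omega(\gamma,v))=\omega_{\Phi,1}(\Omega(\gamma,v),\gamma,v)\ge\gamma$, so $f_{an}$ is unbounded above; the mirror argument with $\omega_{\Phi,2}$ shows it is unbounded below. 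Hence $f_{an}\colon\rline\to\rline$ is a continuous increasing bijection, and $f_{an}^{-1}(y)\to\pm\infty$ as $y\to\pm\infty$. Second, rearranging \rfb{storage} (use $\gamma v=\int_0^v\gamma\,\dd\sigma$ and $-\int_0^v(\cdot)+\int_0^{\Omega}(\cdot)=\int_v^{\Omega}(\cdot)$) gives, for every $(\gamma,v)$,
\[
H_{\circlearrowleft}(\gamma,v)=\int_0^v\bigl(\gamma-f_{an}(\sigma)\bigr)\,\dd\sigma+\int_v^{\Omega(\gamma,v)}\bigl(\omega_\Phi(\sigma,\gamma,v)-f_{an}(\sigma)\bigr)\,\dd\sigma .
\]

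I would then carry out the core estimate. Fix $v$ and first take $\gamma>f_{an}(v)$, so $\Omega:=\Omega(\gamma,v)>v$ and $\omega_\Phi=\omega_{\Phi,1}$ on $[v,\Omega]$; set $g(\sigma):=\omega_{\Phi,1}(\sigma,\gamma,v)-f_{an}(\sigma)$. By construction of $\Omega$ together with \rfb{eq_ass_Omega_11}, $g\ge 0$ on $[v,\Omega]$ with $g(v)=\gamma-f_{an}(v)$. Since $\dd\omega_{\Phi,1}(\sigma)/\dd\sigma=f_1(\omega_{\Phi,1}(\sigma),\sigma)\ge 0$, we get $g'(\sigma)\ge-\dd f_{an}(\sigma)/\dd\sigma$, hence $g(\sigma)\ge g(v)-\bigl(f_{an}(\sigma)-f_{an}(v)\bigr)=\gamma-f_{an}(\sigma)$ on $[v,\Omega]$; also $f_{an}(\Omega)=\omega_{\Phi,1}(\Omega)\ge\gamma$, i.e.\ $\Omega\ge f_{an}^{-1}(\gamma)\ge v$. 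Therefore $\int_v^{\Omega}g\ge\int_v^{f_{an}^{-1}(\gamma)}g\ge\int_v^{f_{an}^{-1}(\gamma)}\bigl(\gamma-f_{an}(\sigma)\bigr)\,\dd\sigma$, and substituting into the identity above,
\[
H_{\circlearrowleft}(\gamma,v)\ \ge\ \int_0^{f_{an}^{-1}(\gamma)}\bigl(\gamma-f_{an}(\sigma)\bigr)\,\dd\sigma\ =\ \int_0^\gamma f_{an}^{-1}(y)\,\dd y ,
\]
the last equality being the integration-by-parts identity for inverse functions (valid because $f_{an}(0)=0$). For $\gamma<f_{an}(v)$ the reasoning is completely symmetric: now $\Omega<v$, $\omega_\Phi=\omega_{\Phi,2}$ on $[\Omega,v]$, $\omega_{\Phi,2}$ is nondecreasing, and working with $h(\sigma):=f_{an}(\sigma)-\omega_{\Phi,2}(\sigma,\gamma,v)\ge 0$ and $f_2\ge 0$ yields the same bound $H_{\circlearrowleft}(\gamma,v)\ge\int_0^\gamma f_{an}^{-1}(y)\,\dd y$.

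It then only remains to observe that $\gamma\mapsto\int_0^\gamma f_{an}^{-1}(y)\,\dd y$ is nonnegative and tends to $+\infty$ as $|\gamma|\to\infty$: since $f_{an}^{-1}$ is increasing with $f_{an}^{-1}(0)=0$, for $\gamma\ge 1$ this integral is at least $\int_0^1 f_{an}^{-1}(y)\,\dd y+(\gamma-1)f_{an}^{-1}(1)$ with $f_{an}^{-1}(1)>0$, and symmetrically for $\gamma\to-\infty$. As the lower bound does not depend on $v$, this shows $H_{\circlearrowleft}(\gamma,v)\to+\infty$ as $|\gamma|\to\infty$ for every $v$, i.e.\ $H_{\circlearrowleft}(\cdot,v)$ is radially unbounded.

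The hard part is exactly the regime $v\le 0$ with $\gamma\to+\infty$ (and its mirror image $v\ge 0$ with $\gamma\to-\infty$): there the rectangle term $\int_0^v(\gamma-f_{an})$ is zero or tends to $-\infty$, so all of the growth must come from the traversing segment over $[v,\Omega(\gamma,v)]$. Making that segment deliver the bound $g(\sigma)\ge\gamma-f_{an}(\sigma)$ is where the sign conditions $f_1,f_2\ge 0$ of Theorem~\ref{CCW_hys} are used in an essential way (they cap how fast $\omega_\Phi$ can move away from $f_{an}$). The other points requiring care, both following from the standing hypotheses, are that $\omega_\Phi$ stays on the correct side of $f_{an}$ all the way to $\Omega(\gamma,v)$ and that $f_{an}^{-1}$ is defined on all of $\rline$.
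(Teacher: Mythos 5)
Your proof is correct, and it takes a genuinely different route from the paper's. The paper starts from the same decomposition $H_{\circlearrowleft}(\gamma,v)=\int_0^v(\gamma-f_{an}(\sigma))\,\dd\sigma+\int_v^{\Omega(\gamma,v)}(\omega_\Phi(\sigma,\gamma,v)-f_{an}(\sigma))\,\dd\sigma$, but then argues case by case: for $v>0$ and $\gamma\to+\infty$ it simply discards the nonnegative segment term and uses the rectangle term $(\gamma-f_{an}(v))v$, which only grows when $v>0$; for $\gamma\to-\infty$ it switches to a second decomposition over $[0,\Omega]$ and $[\Omega,v]$, fixes an auxiliary level $\bar\gamma$ with $0>\bar\gamma>\gamma$, and uses monotonicity of $\omega_\Phi$ in its initial condition to pin down a fixed $\bar\Omega=\Omega(\bar\gamma,v)<0$ yielding the bound $(\gamma-f_{an}(\bar\Omega))\bar\Omega\to\infty$; the case $v\le 0$ is then dispatched with ``similar arguments.'' You instead extract growth from the traversing segment itself via the pointwise comparison $\omega_\Phi(\sigma,\gamma,v)-f_{an}(\sigma)\ge\gamma-f_{an}(\sigma)$ (which uses $f_1,f_2\ge0$ exactly as you say), push the integration out to $f_{an}^{-1}(\gamma)$, and collapse everything into the single $v$-independent bound $\int_0^\gamma f_{an}^{-1}(y)\,\dd y$ via Young's inverse-function identity. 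What this buys is a uniform-in-$v$ lower bound and a single argument covering all sign combinations of $v$ and $\gamma$ --- in particular it makes explicit the regime $v\le 0$, $\gamma\to+\infty$ that the paper leaves to the reader, where the rectangle term is nonpositive. What the paper's route buys is that it needs less machinery per case (only nonnegativity of the segment term and monotone dependence of $\omega_\Phi$ on the initial output), though it implicitly relies on the same facts you state up front ($f_{an}'>\epsilon$, hence $f_{an}$ invertible and onto, which the paper itself establishes in the proof of Theorem \ref{SOCCW-CCW}). Both arguments share the same unproved-but-standard ingredient that $\omega_\Phi$ stays on the correct side of $f_{an}$ up to $\Omega(\gamma,v)$.
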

\begin{proof}
Let us consider $v>0$. To show the properness of $H_{\circlearrowleft}(\cdot,v)$, let us first consider the case where $\gamma \geq f_{an}(v)$. In this case, we rewrite the function $H_{\circlearrowleft}$, as follows
\begin{equation*}
H_{\circlearrowleft}(\gamma,v)=\int_{0}^{v}{\gamma -f_{an}(\sigma) \dd \sigma}+\int_{v}^{\Omega(\gamma, v)}{\omega_{\Phi}(\sigma,\gamma,v)-f_{an}(\sigma) \dd \sigma}
\end{equation*}
Due to the property of the CCW intersecting function $\Omega$, $\gamma \geq f_{an}(v)$ implies that $\Omega(\gamma,v)\geq v$. Hence the last term on the RHS of the above equation is non-negative, i.e., $\int_{v}^{\Omega(\gamma, v)}{\omega_{\Phi}(\sigma,\gamma,v)-f_{an}(\sigma) \dd \sigma} \geq 0$. Then,
\begin{equation}\label{rad_ccw}
H_{\circlearrowleft}(\gamma,v)\geq \int_{0}^{v}{\gamma -f_{an}(\sigma) \dd \sigma} \geq \int_{0}^{v}{\gamma -f_{an}(v) \dd \sigma}=(\gamma-c)v,
\end{equation}
where $c:=f_{an}(v)$. Equation \rfb{rad_ccw} indicates that for every $v>0$, $H_{\circlearrowleft}(\gamma, v)\rightarrow \infty$ as $\gamma \rightarrow \infty$.
%

To evaluate the other limit when $\gamma \rightarrow -\infty$, let us consider the case when $\gamma < 0$. Note that in this case $\gamma < f_{an}(v)$ due to the monotonicity assumption on $f_{an}$ and $f_{an}(0)=0$. Rewriting $H_{\circlearrowleft}$, we have
%
\begin{align*}
H_{\circlearrowleft}(\gamma,v)&=\int_{0}^{\Omega(\gamma, v)}{\gamma  -f_{an}(\sigma)\dd \sigma}+ \int_{\Omega(\gamma, v)}^{v}{\gamma - \omega_{\Phi}(\sigma,\gamma,v)\dd \sigma} \\
& \geq \int_{0}^{\Omega(\gamma, v)}{\gamma  -f_{an}(\sigma)\dd \sigma} =\int^{0}_{\Omega(\gamma, v)}{f_{an}(\sigma)- \gamma \dd \sigma}.
\end{align*}
The last inequality is obtained due to the property of the CCW intersecting function $\Omega$, where $\Omega(\gamma,v) < v$ whenever $\gamma < f_{an}(v)$. Since $\omega_{\Phi}$ is monotone and non-decreasing (due to the positivity of $f_1$ and $f_2$) and using the fact that $f_{an}$ is monotone increasing and $f_{an}(0)=0$, it can be checked that $\gamma <0$ implies that $\Omega(\gamma,v) <0$.

Now let us fix $\bar{\gamma}$ such that $0>\bar{\gamma}>\gamma$. Using the fact that $\omega_{\Phi}(\sigma,\bar{\gamma},v) \geq \omega_{\Phi}(\sigma,\gamma,v)$ for all $\sigma < v$ and using monotonicity of $f_{an}$, it follows that $0>\bar{\Omega}>\Omega(\gamma,v)$ where the constant $\bar{\Omega}:=\Omega(\bar{\gamma},v)$. Thus
 \begin{align*}
H_{\circlearrowleft}(\gamma,v) &\geq \int^{0}_{\Omega(\gamma, v)}{f_{an}(\sigma)- \gamma \dd \sigma}\\&= \int_{\Omega(\gamma, v)}^{\bar{\Omega}}{f_{an}(\sigma)- \gamma \dd \sigma}+\int^{0}_{\bar{\Omega}}{f_{an}(\sigma)- \gamma \dd \sigma} \\
& \geq \int^{0}_{\bar{\Omega}}{f_{an}(\sigma)- \gamma \dd \sigma} \geq \int^{0}_{\bar{\Omega}}{f_{an}(\bar{\Omega})- \gamma \dd \sigma}\\
&=(\gamma-f_{an}(\bar{\Omega}))\bar{\Omega}.
\end{align*}
The last equality shows that as $\gamma \rightarrow - \infty$, $H_{\circlearrowleft}\rightarrow \infty$ since $\bar{\Omega}<0$. Therefore, we can conclude that for the case $v >0$, the function $H_{\circlearrowleft}(\cdot,v)$ is radially unbounded.

Using similar arguments we can get the same conclusion for the case when $v \leq 0$.
\end{proof}
\subsection{Duhem operator with CW characterization}\label{DuhemCW}
The CW property of the Duhem operator has been discussed in our previous results in \cite{RUIYUESCL2012}, where we also constructed a function $H_{\circlearrowright}: \rline^2 \rightarrow \rline_+$ for the Duhem operator such that \rfb{passivehysteresis_cw} holds. Following a similar procedure as before, the construction of the function $H_{\circlearrowright}$ requires three functions: the traversing function $\omega_{\Phi}$, the anhysteresis function $f_{an}$ and the intersecting function $\Lambda$. The definitions of the functions $\omega_{\Phi}$ and $f_{an}$ are the same as those given in Section \ref{DuhemCCW}. However the CW intersecting function $\Lambda$ has a different definition than that of the function $\Omega$.

The function $\Lambda:\rline^2\to \rline$ is a {\it CW intersecting function} if $\omega_\Phi(\Lambda(\gamma,v),\gamma,v)=f_{an}(\Lambda(\gamma,v))$ for all $(\gamma,v)\in\rline^2$ and $\Lambda(\gamma,v)\leq v$ whenever $\gamma \geq f_{an}(v)$ and $\Lambda(\gamma,v)>v$ otherwise. Here we assume $\Lambda$ is differentiable. In \cite[Lemma 1]{RUIYUESCL2012} sufficient conditions on $f_1$ and $f_2$ which ensure that such $\Lambda$ exists are $f_{an}$ be monotone increasing and
\begin{eqnarray}
\label{eq_ass_Omega_21}
f_1(\gamma,v) >  \frac{\dd f_{an}(v)}{\dd v}+\epsilon & \textrm{ whenever }& \gamma > f_{an}(v)\ \\
\label{eq_ass_Omega_22}
f_2(\gamma,v) >  \frac{\dd f_{an}(v)}{\dd v}+\epsilon & \textrm{ whenever }& \gamma < f_{an}(v)\
\end{eqnarray}
hold with $\epsilon>0$.

We recall our main results in \cite{RUIYUESCL2012} in the following theorem, which gives the sufficient conditions on $\Phi$ such that it is CW.

\begin{theorem}\cite[Theorem $1$]{RUIYUESCL2012}\label{thm2}
Consider the Duhem hysteresis operator $\Phi$ defined in \rfb{babuskamodel} with $C^1$ functions $f_1, f_2:\rline^2 \to \rline_+$. Let $f_{an}$ be the corresponding anhysteresis function which satisfies \rfb{eq_ass_Omega_21} and \rfb{eq_ass_Omega_22}. Denote by $\Lambda$ the corresponding CW intersecting function. Suppose that for all $(\gamma,v)$ in $\rline^2$, $f_1(\gamma,v)\geq f_2(\gamma,v)$ whenever $\gamma \leq f_{an}(v)$ and $f_1(\gamma,v)< f_2(\gamma,v)$ otherwise. Let the anhysteresis function $f_{an}$ satisfies $f_{an}(0)= 0$. Then $\Phi$ is CW with the storage function $H_{\circlearrowright}: \rline^2 \rightarrow \rline_+$ be given by
\begin{equation}\label{storage_cw}
H_{\circlearrowright}(\gamma,v)=\int_{0}^{\Lambda(\gamma, v)}{f_{an}(\sigma) \dd \sigma}-\int_{v}^{\Lambda(\gamma,v)}{\omega_\Phi(\sigma,\gamma,v) \dd \sigma},
\end{equation}
\end{theorem}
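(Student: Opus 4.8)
The plan is to establish the two ingredients behind the clockwise criterion recorded right after Definition~\ref{de-CW}: that $H_{\circlearrowright}$ in \rfb{storage_cw} is non-negative, and that along every solution it satisfies $\frac{\dd H_{\circlearrowright}(y_\Phi(t),u_\Phi(t))}{\dd t}\le y_\Phi(t)\dot u_\Phi(t)$ for a.e. $t$. Granted these, integrating from $0$ to $T$ gives $\int_0^T y_\Phi(\tau)\dot u_\Phi(\tau)\,\dd\tau\ge H_{\circlearrowright}(y_\Phi(T),u_\Phi(T))-H_{\circlearrowright}(y_\Phi(0),u_\Phi(0))\ge -H_{\circlearrowright}(y_\Phi(0),u_\Phi(0))>-\infty$, which is \rfb{defCW}; this parallels the proof of Theorem~\ref{CCW_hys}.

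For the dissipation inequality I would abbreviate $\gamma=y_\Phi(t)$, $v=u_\Phi(t)$, $\Lambda=\Lambda(\gamma,v)$ and differentiate \rfb{storage_cw} with Leibniz' rule. The identities $\omega_\Phi(\Lambda,\gamma,v)=f_{an}(\Lambda)$ (definition of the CW intersecting function) and $\omega_\Phi(v,\gamma,v)=\gamma$ (the traversing curve starts at $(v,\gamma)$) kill the terms proportional to $\partial_\gamma\Lambda$ and $\partial_v\Lambda$, leaving $\partial_\gamma H_{\circlearrowright}=-\int_v^\Lambda\partial_\gamma\omega_\Phi(\sigma,\gamma,v)\,\dd\sigma$ and $\partial_v H_{\circlearrowright}=\gamma-\int_v^\Lambda\partial_v\omega_\Phi(\sigma,\gamma,v)\,\dd\sigma$, hence $\frac{\dd H_{\circlearrowright}}{\dd t}=\gamma\dot u_\Phi-\int_v^\Lambda\big(\partial_\gamma\omega_\Phi\,\dot y_\Phi+\partial_v\omega_\Phi\,\dot u_\Phi\big)\dd\sigma$. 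Because $\omega_{\Phi,1}(\cdot,\gamma,v)$ and $\omega_{\Phi,2}(\cdot,\gamma,v)$ solve $z'=f_1(z,\sigma)$, resp. $z'=f_2(z,\sigma)$, with $z(v)=\gamma$, the variational equation gives $\partial_\gamma\omega_{\Phi,i}(\sigma,\gamma,v)=\exp\big(\int_v^\sigma \partial_z f_i\,\dd\tau\big)>0$ and the group identity $\partial_v\omega_{\Phi,i}=-f_i(\gamma,v)\,\partial_\gamma\omega_{\Phi,i}$. Substituting $\dot y_\Phi=f_1(\gamma,v)\dot u_{\Phi+}+f_2(\gamma,v)\dot u_{\Phi-}$, the integrand collapses to $\partial_\gamma\omega_{\Phi,i}\,(f_j(\gamma,v)-f_i(\gamma,v))\,\dot u_\Phi$, where $i$ is the branch active between $v$ and $\Lambda$ ($i=2$ if $\gamma\ge f_{an}(v)$, so $\Lambda\le v$; $i=1$ if $\gamma<f_{an}(v)$, so $\Lambda>v$) and $j=1$ if $\dot u_\Phi>0$, $j=2$ if $\dot u_\Phi<0$. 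If $i=j$ the integrand is zero; otherwise the sign of $f_j-f_i$ is fixed by the standing hypothesis ($f_1<f_2$ above $f_{an}$, $f_1\ge f_2$ below), and together with $\partial_\gamma\omega_{\Phi,i}>0$ and the orientation of the interval $[v,\Lambda]$ this forces $\dot u_\Phi\int_v^\Lambda(\cdots)\dd\sigma\ge 0$. Hence $\frac{\dd H_{\circlearrowright}}{\dd t}\le\gamma\dot u_\Phi=y_\Phi\dot u_\Phi$ a.e. (the set $\dot u_\Phi=0$ is trivial, since there $\dot y_\Phi=0$).

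For non-negativity I would add and subtract $\int_v^\Lambda f_{an}$ in \rfb{storage_cw} to write $H_{\circlearrowright}(\gamma,v)=\int_0^v f_{an}(\sigma)\,\dd\sigma+\int_v^\Lambda\big(f_{an}(\sigma)-\omega_\Phi(\sigma,\gamma,v)\big)\dd\sigma$. The first integral is $\ge 0$ since $f_{an}$ is monotone increasing with $f_{an}(0)=0$. For the second, $f_{an}$ and $\omega_\Phi(\cdot,\gamma,v)$ agree at $\sigma=\Lambda$, and at $\sigma=v$ one has $f_{an}(v)>\gamma=\omega_\Phi(v,\gamma,v)$ when $\gamma<f_{an}(v)$ (reverse inequality when $\gamma\ge f_{an}(v)$); provided $f_{an}-\omega_\Phi$ is single-signed between $v$ and $\Lambda$, the oriented integral is again $\ge 0$, giving $H_{\circlearrowright}\ge 0$.

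The hard part will be this last point — that the traversing curve $\omega_\Phi(\cdot,\gamma,v)$ does not re-cross $f_{an}$ strictly between $v$ and $\Lambda$. This is exactly what the slope conditions \rfb{eq_ass_Omega_21}--\rfb{eq_ass_Omega_22} buy: on the ``wrong'' side of $f_{an}$ the traversing curve has slope strictly larger than $\frac{\dd f_{an}(v)}{\dd v}$, which simultaneously yields existence and uniqueness of $\Lambda$ and the single-sign property just used. A secondary technicality is that $H_{\circlearrowright}$ must be $C^1$ for the chain rule above: this is clear off the locus $\gamma=f_{an}(v)$ from differentiability of $\Lambda$ and smoothness of $\omega_\Phi$, and on it $\Lambda=v$, the integral terms vanish, and the one-sided gradients agree. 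The remaining estimates follow the arguments in \cite{RUIYUESCL2012}.
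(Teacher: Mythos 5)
Your proposal is correct and follows the same high-level strategy as the paper: establish that $H_{\circlearrowright}$ in \rfb{storage_cw} is non-negative and satisfies the pointwise dissipation inequality \rfb{passivehysteresis_cw}, then integrate over $[0,T]$ and use non-negativity to obtain \rfb{defCW}, exactly mirroring the proof of Theorem \ref{CCW_hys}. The substantive difference is one of self-containedness: the paper's own proof of Theorem \ref{thm2} is a one-line deferral that imports both ingredients from \cite[Theorem 1]{RUIYUESCL2012}, whereas you reconstruct them. Your reconstruction checks out: the cancellation of the $\partial\Lambda$ terms via $\omega_\Phi(\Lambda,\gamma,v)=f_{an}(\Lambda)$ and $\omega_\Phi(v,\gamma,v)=\gamma$, the variational identities $\partial_\gamma\omega_{\Phi,i}>0$ and $\partial_v\omega_{\Phi,i}=-f_i(\gamma,v)\,\partial_\gamma\omega_{\Phi,i}$, and the four-way sign analysis (branch $i$ fixed by the side of $f_{an}$ and hence by the orientation of $[v,\Lambda]$, branch $j$ by the sign of $\dot u_\Phi$, with the standing hypothesis $f_1\geq f_2$ below $f_{an}$ and $f_1<f_2$ above determining the sign of $f_j-f_i$) do yield $\dot H_{\circlearrowright}\leq y_\Phi\dot u_\Phi$ almost everywhere. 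What this buys is that the role of each hypothesis becomes visible: the ordering of $f_1$ and $f_2$ enters only in the dissipation inequality, while monotonicity of $f_{an}$ and $f_{an}(0)=0$ enter only in non-negativity. The one place where you still lean on the cited reference is the single-sign property of $f_{an}-\omega_\Phi(\cdot,\gamma,v)$ between $v$ and $\Lambda(\gamma,v)$ --- equivalently, that $\Lambda$ is the unique intersection on the relevant ray --- which you correctly identify as the content of \rfb{eq_ass_Omega_21}--\rfb{eq_ass_Omega_22} via \cite[Lemma 1]{RUIYUESCL2012}; since the theorem statement already posits the existence and differentiability of $\Lambda$, this is an acceptable dependency rather than a gap.
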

\vspace{0.2cm}
The proof is similar to that of Theorem \ref{CCW_hys} where we use also the result in \cite[Theorem $1$]{RUIYUESCL2012} which shows that $H_{\circlearrowright}$ satisfies \rfb{passivehysteresis_cw}.

An example of the CW hysteresis phenomenon is the friction-induced hysteresis in mechanical system, which has CW behavior from the input (i.e., the relative displacement) to the output (i.e., the friction force). One of the standard model to describe friction-induced hysteresis is the Dahl model \cite{DAHL1976,PADTHE2008}, which is given by
\begin{equation}\label{dahl_duhem}
\dot{y}_{\Phi}(t)=\rho\left |1-\frac{y_{\Phi}(t)}{F_c}\textrm{sgn} (\dot{u}_{\Phi}(t))\right |^{r}\\\textrm{sgn} \left (1-\frac{y_{\Phi}(t)}{F_c}\textrm{sgn}(\dot{u}_{\Phi}(t))\right)\dot{u}_{\Phi}(t),
\end{equation}
where $y_{\Phi}$ denotes the friction force, $u_{\Phi}$ denotes the relative displacement, $F_c>0$ denotes the Coulomb friction force, $\rho >0$ denotes the rest stiffness and $r\geq 0$ is a parameter that determines the shape of the hysteresis loops.

The Dahl model can be described by the Duhem hysteresis operator \rfb{babuskamodel} with
\begin{equation}\label{dahlf1}
f_1(y_{\Phi},u_{\Phi}) = \rho \left | 1-\frac{y_{\Phi}}{F_c}\right |^{r} \textrm{sgn}\left (1-\frac{y_{\Phi}}{F_c}\right ),\ f_2(y_{\Phi},u_{\Phi}) = \rho \left | 1+\frac{y_{\Phi}}{F_c}\right |^{r} \textrm{sgn}\left (1+\frac{y_{\Phi}}{F_c}\right ).
\end{equation}
In Figure \ref{dahl}, we illustrate the behavior of the Dahl model where $F_c=0.75$, $\rho=1.5$ and $r=1$.
\begin{figure}[h]
\centering \psfrag{D}{$F(t)$}\psfrag{u}{$u(t)$}\psfrag{t}{$t$}
 \subfigure[] {{\includegraphics[width=2.4in]{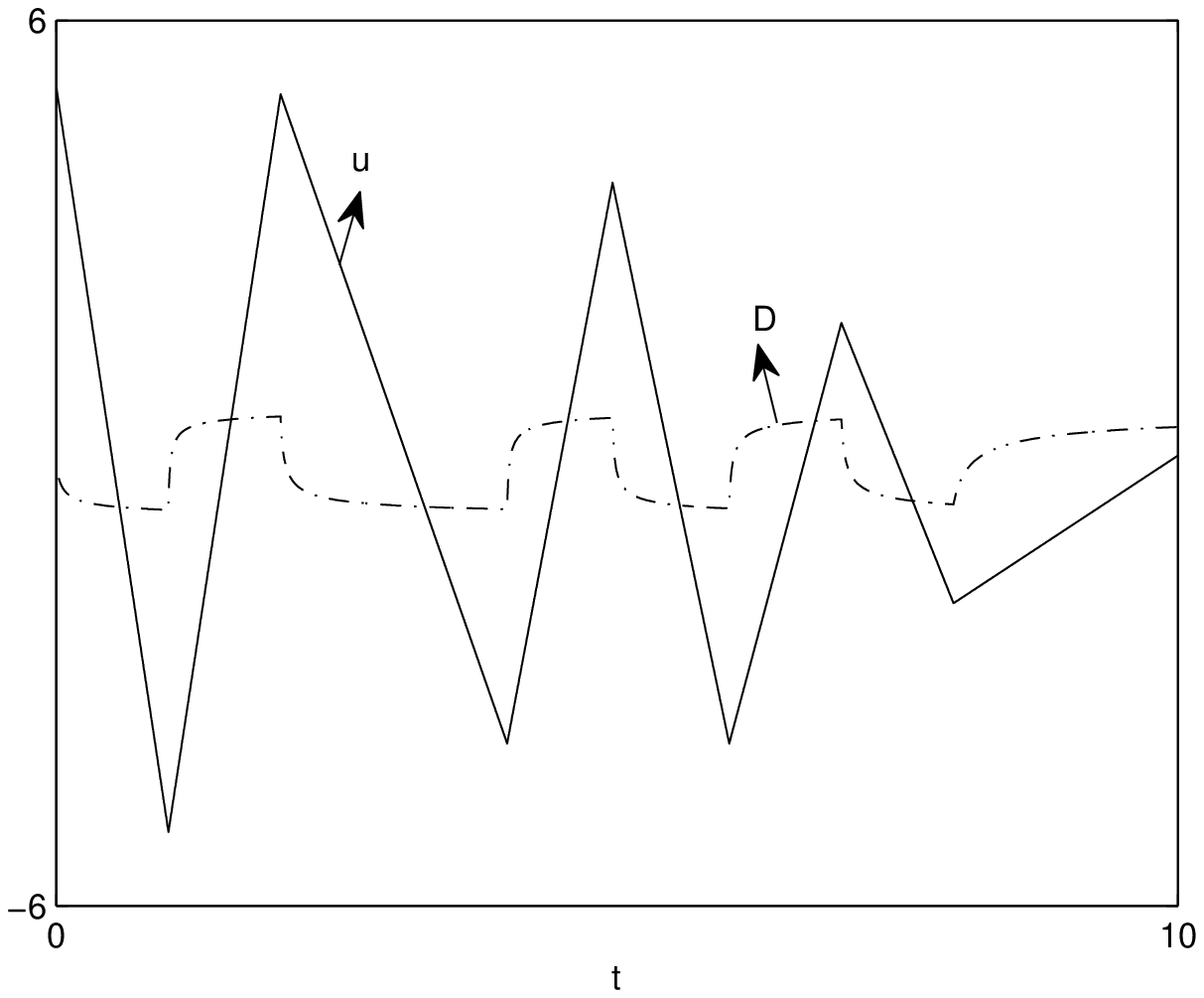}}}
 \subfigure[] {{\includegraphics[width=2.4in]{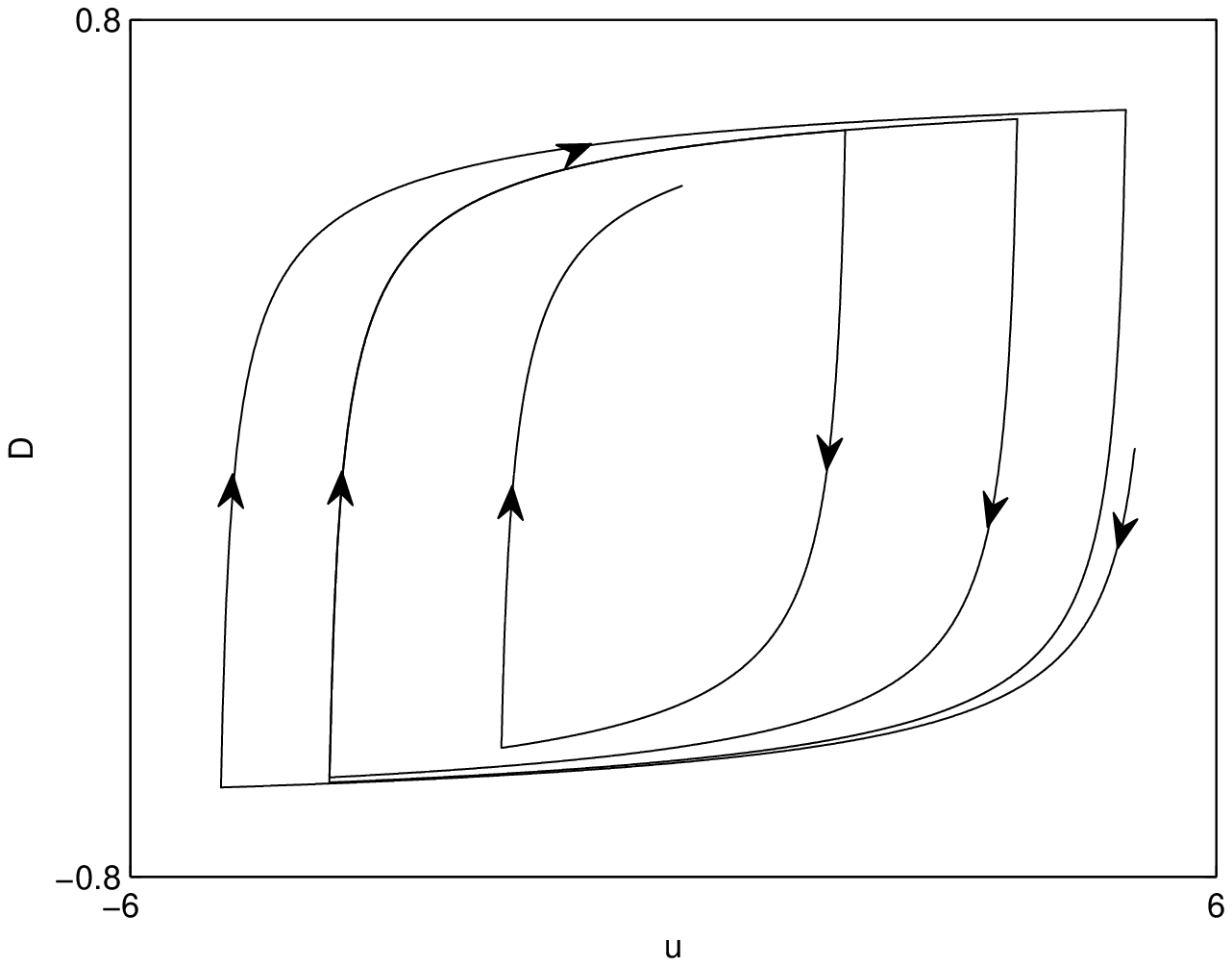}}}
\caption{The input-output dynamics of the Dahl model with $F_c=0.75$, $\rho=1.5$ and $r=1$.}
\label{dahl} \vspace{0.2cm}
\end{figure}

It is immediate to check that $f_1$ and $f_2$ satisfy the hypotheses in \rfb{exis_cond}, which means that for all $u_{\Phi} \in AC(\rline_+)$ and $y_{\Phi}(0)\in \rline$ the solution of \rfb{dahl_duhem} exists for all $t\in \rline_+$. The anhysteresis function of the Dahl model is $f_{an}(u_{\Phi}(t)) = 0$.

Calculating the curve $\omega_{\Phi}$, we have
\begin{equation}
\omega_{\Phi}(\sigma,y_{\Phi}(t),u_{\Phi}(t))=\left \{ \begin{array}{ll} F_c+(y_{\Phi}(t)-F_c)\mathop{e}^{\frac{\rho}{F_c}(u_{\Phi}(t)-\sigma)} \quad \sigma \in[u_{\Phi}(t),\ \infty),\\
                              -F_c+(y_{\Phi}(t)+F_c)\mathop{e}^{\frac{\rho}{F_c}(\sigma -u_{\Phi}(t))}\quad \sigma \in(-\infty,\ u_{\Phi}(t)]. \end{array} \right.
\end{equation}

The CW intersecting function $\Lambda(y_{\Phi}(t),u_{\Phi}(t))$ is given by
\begin{equation}\label{intersectionfun}
\Lambda(y_{\Phi}(t),u_{\Phi}(t))= \left \{ \begin{array}{ll} u_{\Phi}(t)+\frac{F_c}{\rho}{\rm ln}{\frac{F_c}{y_{\Phi}(t)+F_c}}\quad y_{\Phi}(t) \geq 0,\\
                              u_{\Phi}(t)-\frac{F_c}{\rho}{\rm ln}{\frac{-F_c}{y_{\Phi}(t)-F_c}}\quad y_{\Phi}(t) < 0, \end{array} \right.
\end{equation}

\begin{figure}[h]
\centering \psfrag{H}{$H_{\circlearrowright}(y_{\Phi}(t),u_{\Phi}(t))$}\psfrag{u}{$u(t)$}\psfrag{t}{$t$} \psfrag{yp}{$y_{\Phi}(t)$}\psfrag{up}{$u_{\Phi}(t)$}\psfrag{lamb}{$\Lambda(y_{\Phi}(t),u_{\Phi}(t))$} \psfrag{fan}{$f_{an}$}
\includegraphics[width=4in]{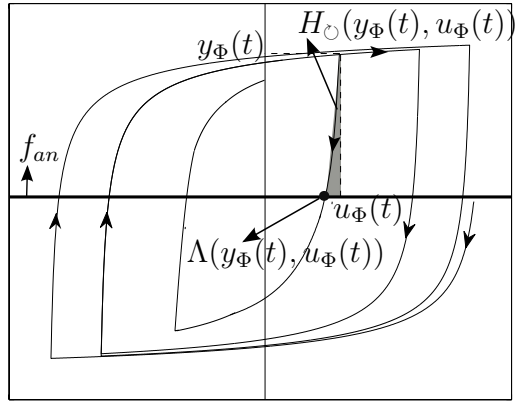}
\caption{Graphical interpretation of the function $H_{\circlearrowright}(y_{\Phi}(t),u_{\Phi}(t))$ of the Dahl model using $f_1$ and $f_2$ as in \rfb{dahlf1} with $\sigma=1$, $F_{c}= 0.5$ and $y_{\Phi_0}=0$.}
\label{Dahl_storage}
\end{figure}
Denoting $u_{\Phi}^*(t)=\Lambda(y_{\Phi}(t),u_{\Phi}(t))$, we can compute explicitly the function $H_{\circlearrowright}$ in \rfb{storage_cw} as follows
%
\begin{equation*}
H_{\circlearrowright}(y_{\Phi}(t),u_{\Phi}(t))= \left \{ \begin{array}{ll} -F_c(u_{\Phi}(t)-u_{\Phi}^*(t))+\frac{F_c}{\rho}(y_{\Phi}(t)+F_c)(1-\mathop{e}^{\frac{\rho}{F_c}(u_{\Phi}^*(t)-u_{\Phi}(t))}) \quad y_{\Phi}(t) \geq 0,\\
                               F_c(u_{\Phi}(t)-u_{\Phi}^*(t))+\frac{F_c}{\rho}(y_{\Phi}(t)-F_c)(\mathop{e}^{\frac{\rho}{F_c}(u_{\Phi}(t)-u_{\Phi}^*(t))}-1)\quad y_{\Phi}(t) < 0. \end{array} \right.
\end{equation*}

The graphical interpretation of $H_{\circlearrowright}$ is shown in Figure \ref{Dahl_storage}, where the value of $H_{\circlearrowright}$ at a given time $t$ is given by the area in grey.

\begin{proposition}\label{proposHcw}
Consider a Duhem operator $\Phi$ satisfying the hypotheses in Theorem \ref{thm2}. Suppose that $f_{an}$ is monotone increasing and $f_{an}(0)=0$. Then the function $H_{\circlearrowright}(\cdot,v)$ (where $H_{\circlearrowright}$ is as in \rfb{storage_cw}) is radially unbounded for every $v$.
\end{proposition}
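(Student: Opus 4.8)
The plan is to mirror the structure of the proof of the CCW counterpart (the proposition about $H_{\circlearrowleft}(\cdot,v)$), but using the CW intersecting function $\Lambda$ and the expression \rfb{storage_cw} for $H_{\circlearrowright}$. Fix $v$; by symmetry I would treat $v>0$ in detail and remark that $v\le 0$ is analogous. The first step is to rewrite $H_{\circlearrowright}(\gamma,v)$ in the two cases $\gamma\geq f_{an}(v)$ and $\gamma< f_{an}(v)$ so that the dominant behavior in $\gamma$ is visible. Starting from
\[
H_{\circlearrowright}(\gamma,v)=\int_{0}^{\Lambda(\gamma, v)}{f_{an}(\sigma)\,\dd\sigma}-\int_{v}^{\Lambda(\gamma,v)}{\omega_\Phi(\sigma,\gamma,v)\,\dd\sigma},
\]
I would combine the two integrals over the overlapping interval to get a form like
\[
H_{\circlearrowright}(\gamma,v)=\int_{0}^{v}\bigl(f_{an}(\sigma)-\gamma\bigr)\dd\sigma+\int_{v}^{\Lambda(\gamma,v)}\bigl(f_{an}(\sigma)-\omega_\Phi(\sigma,\gamma,v)\bigr)\dd\sigma+(\text{lower-order terms in }\gamma),
\]
taking care of signs according to whether $\Lambda(\gamma,v)\gtrless v$; the exact bookkeeping is where one must be careful, but it is the same computation already carried out for $H_{\circlearrowleft}$.

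The key structural facts I would invoke are: (i) $\omega_\Phi(\cdot,\gamma,v)$ is monotone nondecreasing in its first argument because $f_1,f_2\ge 0$, and monotone in $\gamma$ in the sense that $\gamma_1\ge\gamma_2$ implies $\omega_\Phi(\sigma,\gamma_1,v)\ge\omega_\Phi(\sigma,\gamma_2,v)$; (ii) by definition of the CW intersecting function, $\Lambda(\gamma,v)\le v$ when $\gamma\ge f_{an}(v)$ and $\Lambda(\gamma,v)>v$ otherwise, and using monotonicity of $f_{an}$ together with $f_{an}(0)=0$ one checks that $\gamma>0\Rightarrow\Lambda(\gamma,v)<v$ with $\Lambda(\gamma,v)\to-\infty$ as $\gamma\to\infty$, while $\gamma<0$ pushes $\Lambda(\gamma,v)$ above $v$ and to $+\infty$ as $\gamma\to-\infty$; (iii) $\omega_\Phi(\Lambda(\gamma,v),\gamma,v)=f_{an}(\Lambda(\gamma,v))$, so the integrand $f_{an}(\sigma)-\omega_\Phi(\sigma,\gamma,v)$ has a definite sign on $[\,\min(v,\Lambda),\max(v,\Lambda)\,]$, making that integral nonnegative. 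Discarding that nonnegative integral leaves a bound of the form $H_{\circlearrowright}(\gamma,v)\ge \int_0^v (f_{an}(\sigma)-\gamma)\,\dd\sigma$ when $\gamma\le f_{an}(v)$, which is not yet enough because it blows up in the wrong direction; so as in the CCW proof I would instead keep the $\Lambda$-integral and bound it below by estimating $f_{an}$ from below by its value at an intermediate point, exactly the "fix $\bar\gamma$ with $0<\bar\gamma$ or $\bar\gamma<0$ between $\gamma$ and the origin, get a strict ordering $\Lambda(\bar\gamma,v)$ strictly between $v$ and $\Lambda(\gamma,v)$" trick. This yields, for $\gamma\to+\infty$, a lower bound growing like $-\gamma\,\Lambda(\bar\gamma,v)$ (positive since $\Lambda(\bar\gamma,v)<v$ can be arranged negative, or more simply $\Lambda(\gamma,v)\to-\infty$ forces the first integral $\int_v^{\Lambda}\to+\infty$), and symmetrically for $\gamma\to-\infty$.

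I expect the main obstacle to be purely organizational: handling the four sign-sub-cases ($\gamma$ above/below $f_{an}(v)$ crossed with $v$ positive/negative) without error, and in particular verifying cleanly that $\Lambda(\gamma,v)$ diverges monotonically as $|\gamma|\to\infty$ — this uses that $\omega_\Phi$ is built from solving an ODE with $C^1$, nonnegative $f_1,f_2$ and the anhysteresis-crossing conditions \rfb{eq_ass_Omega_21}--\rfb{eq_ass_Omega_22}, which force the traversing curve to separate from $f_{an}$ at a uniform rate $\epsilon$, so that the crossing point $\Lambda(\gamma,v)$ must run off to infinity as the initial offset $\gamma - \omega_\Phi(v,f_{an}(v),v)$ grows. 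Once that divergence is in hand, radial unboundedness of $H_{\circlearrowright}(\cdot,v)$ follows by exactly the chain of inequalities used for $H_{\circlearrowleft}$, so I would state the $v\le 0$ case as "by symmetric arguments" and conclude.
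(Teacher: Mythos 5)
Your proposal takes essentially the same route as the paper: reduce to the single integral $\int_v^{\Lambda(\gamma,v)}\bigl(f_{an}(\sigma)-\omega_\Phi(\sigma,\gamma,v)\bigr)\,\dd\sigma$, replace $\omega_\Phi$ by $\gamma$ via monotonicity, and fix an intermediate $\bar\gamma$ between $f_{an}(v)$ and $\gamma$ so that $\Lambda(\gamma,v)$ lies beyond the fixed point $\bar\Lambda=\Lambda(\bar\gamma,v)<v$, yielding the lower bound $(\gamma-f_{an}(v))(v-\bar\Lambda)$, affine in $\gamma$ with fixed positive coefficient. Two small remarks: the divergence of $\Lambda(\gamma,v)$ as $|\gamma|\to\infty$, which you flag as the main obstacle, is never needed (the fixed separation $v-\bar\Lambda>0$ suffices), and the exact identity is $H_{\circlearrowright}(\gamma,v)=\int_0^v f_{an}(\sigma)\,\dd\sigma+\int_v^{\Lambda(\gamma,v)}\bigl(f_{an}(\sigma)-\omega_\Phi(\sigma,\gamma,v)\bigr)\,\dd\sigma$ (obtained in the paper by subtracting $H_{\circlearrowright}(f_{an}(v),v)$), with no $-\gamma v$ term, so your $\int_0^v(f_{an}(\sigma)-\gamma)\,\dd\sigma$ bookkeeping is slightly off but harmless since you discard that route anyway.
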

\begin{proof}
To show the properness of $H_{\circlearrowright}(\cdot,v)$ for any given $v$, we first consider the case $\gamma \geq f_{an}(v)$. Since the Duhem operator $\Phi$ satisfies the hypotheses in Theorem \ref{thm2}, the function $H_{\circlearrowright}$ is nonnegative. Thus, using \rfb{storage_cw} and since $\Lambda(f_{an}(v),v)=v$ we have
\begin{align*}
H_{\circlearrowright}(\gamma,v) &\geq H_{\circlearrowright}(\gamma,v)-H_{\circlearrowright}(f_{an}(v),v)\\&= \int_{\Lambda(f_{an}(v),v)}^{\Lambda(\gamma,v)}{f_{an}(\sigma) \dd \sigma}-\int_{v}^{\Lambda(\gamma,v)}{\omega_{\Phi}(\sigma,\gamma,v)\dd \sigma} + \int_{v}^{\Lambda(f_{an}(v),v)}{\omega_{\Phi}(\sigma,\gamma,v)\dd \sigma}\\
&= \int_{v}^{\Lambda(\gamma,v)}{f_{an}(\sigma)-\omega_{\Phi}(\sigma,\gamma,v)\dd \sigma}.
\end{align*}
By the definition of the CW intersecting function $\Lambda$, $\gamma \geq f_{an}(v)$ implies that $\Lambda(\gamma,v)<v$. Using the monotonicity of $\omega_{\Phi}$, $\omega_{\Phi}(\sigma,\gamma,v)\leq \gamma$ for all $\sigma<v$, and thus, it follows from the above inequality that
\begin{align*}
H_{\circlearrowright}(\gamma,v)&\geq \int_{v}^{\Lambda(\gamma,v)}{f_{an}(\sigma)-\omega_{\Phi}(\sigma,\gamma,v)\dd \sigma}\\
&\geq \int_{v}^{\Lambda(\gamma,v)}{f_{an}(\sigma)-\gamma \dd \sigma},
\end{align*}
Now let us fix $\bar{\gamma}$ s.t. $f_{an}(v)<\bar{\gamma} < \gamma$. Since $\omega_{\Phi}(\sigma,\bar{\gamma},v)< \omega_{\Phi}(\sigma,\gamma,v)$ for all $\sigma < v$ and using the monotonicity of $f_{an}$, we have that $\Lambda(\gamma,v) < \bar{\Lambda}$, where $\bar{\Lambda}=\Lambda(\bar{\gamma},v)$. Therefore,
\begin{align*}
H_{\circlearrowright}(\gamma,v) &\geq \int_{v}^{\Lambda(\gamma,v)}{ f_{an}(\sigma)-\gamma  \dd \sigma} \geq \int_{v}^{\bar{\Lambda}}{f_{an}(\sigma)-\gamma  \dd \sigma}\\
&\geq  \int_{v}^{\bar{\Lambda}}{f_{an}(v)-\gamma  \dd \sigma}=(\gamma-c)(v-\bar{\Lambda})>0
\end{align*}
where $c:=f_{an}(v)$ and $\bar{\Lambda}=\Lambda(\bar{\gamma},v)$ for any given $v $. Hence, it implies that for every $v$, $H_{\circlearrowright}(\gamma, v)\rightarrow \infty$ as $\gamma \rightarrow \infty$.

We can apply similar arguments to show that for every $v$, $H_{\circlearrowright}(\gamma, v)\rightarrow \infty$ as $\gamma \rightarrow -\infty$ by evaluating the case when $\gamma < f_{an}(v)$.
\end{proof}
\section{Linear system with CCW Duhem hysteresis}\label{feedback_CCW}
In this section we analyze the stability of a feedback interconnection of a linear system and a CCW Duhem hysteresis operator. The stability of the closed-loop system is analyzed by exploiting the CCW or CW properties of each subsystem.
\begin{theorem}\label{SOCCW-CCW}
Consider a positive feedback interconnection of a minimal single-input single-output linear system and a Duhem operator $\Phi$ as shown in Figure \ref{blockscheme} satisfying the hypotheses in Theorem \ref{CCW_hys} as follows
\begin{equation} \label{pffeedback_CCW}
\left.\begin{array}{rl}
  \calP :& \begin{array}{rl}     \dot{x}&=Ax+Bu,\\
          y&=Cx,\end{array}\\
   {\bf \Phi}:&   \dot{y}_{\Phi}=f_1(y_{\Phi}(t),u_{\Phi}(t))\dot u_{\Phi+}(t) + f_2(y_{\Phi}(t),u_{\Phi}(t))\dot u_{\Phi-}(t),\\
          &u= y_{\Phi},  \ u_{\Phi}=y,
        \end{array}\right\}
\end{equation}
where $A\in \rline^{n\times n}$, $B\in\rline^{n\times 1}$ and $C\in\rline^{1\times n}$. Let $\varepsilon:= (CB)^{-1}$ where we assume that $CB>0$. Suppose that there exist $\xi>0$ and $Q=Q^T >0$ such that
\begin{align}
\frac{1}{2}(A^TQ+QA)+\varepsilon A^TC^TCA &\leq 0, \label{LMI11} \\
QB + A^TC^T &= 0, \label{LMI21}\\
Q-\xi C^TC&>0, \label{LMI3}
\end{align}
hold  and the anhysteresis function $f_{an}$ satisfies $(f_{an}(v)-\xi v)v \leq 0$ for all $v \in \rline$ (i.e. $f_{an}$ belongs to the sector $[0, \xi]$). Then for every initial condition $(x(0),y_{\Phi}(0))$, the state trajectory of the closed-loop system \rfb{pffeedback_CCW} is bounded and converges to the largest invariant set in $\{(x,y_{\Phi})|CAx+CBy_{\Phi}=0 \}$.
\end{theorem}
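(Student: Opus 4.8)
The plan is to assemble a Lyapunov function for the closed loop from the storage functions of the two blocks and then invoke LaSalle's invariance principle. \textbf{First}, I would verify that $\calP$ is S-CCW with storage function $V_1(x)=\tfrac12 x^{T}Qx$: applying Lemma~\ref{lemma1} with $f(x,u)=Ax+Bu$ and $h(x)=Cx$, one has $\tfrac{\partial h}{\partial x}f=C(Ax+Bu)$, and a direct check shows that \eqref{LMI21} and \eqref{LMI11} are exactly what makes
\[
\tfrac{\partial V_1}{\partial x}(Ax+Bu)\ \le\ \big(C(Ax+Bu)\big)u-\varepsilon\big(C(Ax+Bu)\big)^2,\qquad \varepsilon=(CB)^{-1},
\]
hold for all $x,u$: \eqref{LMI21} turns the cross term $x^{T}QBu$ into $-(CAx)u$, and $\varepsilon=(CB)^{-1}$ cancels the terms quadratic in $u$, leaving precisely the form in \eqref{LMI11}. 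Hence along any closed-loop trajectory $\dot V_1\le \dot y\,u-\varepsilon\dot y^2$ with $\dot y=C\dot x=CAx+CBy_{\Phi}$. For $\Phi$ I would simply quote Theorem~\ref{CCW_hys}: $H_{\circlearrowleft}$ in \eqref{storage} is nonnegative and satisfies $\dot H_{\circlearrowleft}(y_{\Phi},u_{\Phi})\le \dot y_{\Phi}u_{\Phi}$ a.e.

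\textbf{Second}, using the positive-feedback interconnection $u=y_{\Phi}$, $u_{\Phi}=y=Cx$, I would set $\mathcal V(x,y_{\Phi}):=V_1(x)+H_{\circlearrowleft}(y_{\Phi},Cx)-(Cx)\,y_{\Phi}$. Since $\tfrac{d}{dt}\big((Cx)y_{\Phi}\big)=\dot y\,y_{\Phi}+y\,\dot y_{\Phi}$, adding the two dissipation inequalities makes the coupling terms $\dot y\,y_{\Phi}$ and $y\,\dot y_{\Phi}$ cancel and leaves, for a.e.\ $t$, $\dot{\mathcal V}\le-\varepsilon(CAx+CBy_{\Phi})^2\le 0$, so $\mathcal V$ is nonincreasing along trajectories.

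\textbf{Third} — and this is the crux — I would show $\mathcal V$ is positive definite and proper. The key estimate is $H_{\circlearrowleft}(\gamma,v)-\gamma v\ge-\tfrac{\xi}{2}v^2$ for all $(\gamma,v)$. Rearranging \eqref{storage}: for $\gamma\ge f_{an}(v)$ (so $\Omega(\gamma,v)\ge v$) one gets $H_{\circlearrowleft}(\gamma,v)-\gamma v=-\int_0^v f_{an}+\int_v^{\Omega(\gamma,v)}(\omega_{\Phi}(\cdot,\gamma,v)-f_{an})$, the last integral being $\ge 0$ because by \eqref{eq_ass_Omega_11} the curve $\omega_{\Phi}(\cdot,\gamma,v)$ decreases toward $f_{an}$ and stays above it on $[v,\Omega(\gamma,v)]$; for $\gamma<f_{an}(v)$ the analogous rearrangement together with \eqref{eq_ass_Omega_12} (which gives $\omega_{\Phi}\le f_{an}$ on $(\Omega(\gamma,v),v)$) again yields $H_{\circlearrowleft}(\gamma,v)-\gamma v\ge-\int_0^v f_{an}$. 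The sector condition $(f_{an}(v)-\xi v)v\le0$ — with $f_{an}$ monotone increasing, which forces $f_{an}(0)=0$ — gives $\int_0^v f_{an}\le\tfrac{\xi}{2}v^2$ for every $v$, proving the estimate. Substituting $\gamma=y_{\Phi}$, $v=Cx$ then gives $\mathcal V(x,y_{\Phi})\ge\tfrac12 x^{T}(Q-\xi C^{T}C)x\ge0$, which by \eqref{LMI3} is positive definite and radially unbounded in $x$; properness in $y_{\Phi}$ (locally uniformly in $x$) follows because $H_{\circlearrowleft}(\gamma,v)-\gamma v\to+\infty$ as $|\gamma|\to\infty$ — the term $\int_v^{\Omega(\gamma,v)}(\omega_{\Phi}-f_{an})$ diverges as $\Omega(\gamma,v)\to\pm\infty$ — by the same argument used in the (unnumbered) proposition on radial unboundedness of $H_{\circlearrowleft}$.

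\textbf{Finally}, $\mathcal V$ nonincreasing and proper forces each trajectory into the compact sublevel set $\{\mathcal V\le\mathcal V(x(0),y_{\Phi}(0))\}$, hence it is bounded; the closed loop is autonomous with a locally Lipschitz right-hand side (using $f_1,f_2\in C^1$ and \eqref{exis_cond}), so LaSalle's invariance principle — with the a.e.\ chain-rule inequality of Theorem~\ref{CCW_hys} supplying the needed derivative of the nonsmooth $\mathcal V$ — yields convergence to the largest invariant set in $\{\dot{\mathcal V}=0\}$, and $\dot{\mathcal V}=0$ forces $CAx+CBy_{\Phi}=0$, giving the claim. I expect the main obstacle to be precisely the third step: proving the lower bound on $H_{\circlearrowleft}$ and the joint properness of $\mathcal V$ in $(x,y_{\Phi})$, since this is what upgrades dissipativity to boundedness of the \emph{full} state — in particular of the internal hysteresis variable $y_{\Phi}$, for which no a priori bound is available — and it is exactly here that the sector condition on $f_{an}$ and the strict LMI \eqref{LMI3} enter; justifying LaSalle for the nonsmooth data $\mathcal V,f_1,f_2$ is a routine secondary point.
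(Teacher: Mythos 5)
Your proposal is correct and follows essentially the same route as the paper's proof: the same storage function $V(x)=\tfrac12 x^TQx$ with \rfb{LMI11}--\rfb{LMI21} giving S-CCW of $\calP$, the same closed-loop Lyapunov function $V(x)+H_{\circlearrowleft}(y_{\Phi},Cx)-Cx\,y_{\Phi}$, the same rearrangement $H_{\circlearrowleft}(\gamma,v)-\gamma v=-\int_0^v f_{an}+\int_v^{\Omega(\gamma,v)}(\omega_\Phi-f_{an})$ combined with the sector condition and \rfb{LMI3} to get positive definiteness and properness, and LaSalle to conclude. The only cosmetic difference is that the paper makes the properness in $y_{\Phi}$ explicit via the auxiliary function $\int_{Cx}^{f_{an}^{-1}(y_{\Phi})}(y_{\Phi}-f_{an}(\sigma))\,\dd\sigma$ and a midpoint estimate, where you defer to the radial-unboundedness proposition; the substance is the same.
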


\begin{proof}
Using $V(x)=\frac{1}{2}x^TQx$ and \rfb{LMI11} and \rfb{LMI21}, it can be checked that
\begin{align*}
\dot{V} &= \frac{1}{2}x^T(A^TQ+QA)x+x^TQBu\\
&\leq -\varepsilon x^TA^TC^TCAx -x^TA^TC^Tu\\
&=(u^TB^TC^T+x^TA^TC^T)u-\varepsilon (CAx+CBu)^T(CAx+CBu)\\
&= \langle\dot{y}, u \rangle - \varepsilon \dot{y}^2.
\end{align*}
It follows from Lemma \ref{lemma1} that the linear system is S-CCW.

By the assumptions of the theorem, the Duhem operator $\Phi$ is also CCW with the storage function $H_{\circlearrowleft} : \rline^2 \rightarrow \rline_+$ as given in \rfb{storage}.

Now let $H_{cl}(x,y_{\Phi})=V(x) + H_{\circlearrowleft}(y_{\Phi},Cx)-Cxy_{\Phi}$ be the Lyapunov function of the interconnected system \rfb{pffeedback_CCW}. We show first that $H_{cl}$  is lower bounded. Substituting the representation of $V$ and $H_{\circlearrowleft}$, we have
\begin{align}\nonumber
H_{cl}&=\frac{1}{2}x^TQx+zCx-\int_0^{Cx}{\omega_\Phi(\sigma,y_{\Phi},Cx) \ \dd\sigma} + \int_0^{\Omega(y_{\Phi},Cx)}{\omega_\Phi(\sigma,y_{\Phi},Cx)\dd\sigma}\\\nonumber
      &- \int_0^{\Omega(y_{\Phi},Cx)}{f_{an}(\sigma)\dd\sigma}-Cxy_{\Phi}\\\nonumber
      &=\frac{1}{2}x^TQx-\int_0^{Cx}{f_{an}(\sigma)\dd\sigma}+\int_{Cx}^{\Omega(y_{\Phi},Cx)}{\omega_\Phi(\sigma,y_{\Phi},Cx)-f_{an}(\sigma)\dd\sigma}\\\nonumber
      & \geq \frac{1}{2}x^TQx-\int_0^{Cx}{(f_{an}(\sigma)-\xi \sigma)\dd\sigma}-\int_0^{Cx}{\xi \sigma \dd \sigma}+\int_{Cx}^{\Omega(y_{\Phi},Cx)}{\omega_\Phi(\sigma,y_{\Phi},Cx)-f_{an}(\sigma)\dd\sigma}\\\label{ccw_hcl_rad}
      &\geq \frac{1}{2}x^T(Q-\xi C^TC)x+\int_{Cx}^{\Omega(y_{\Phi},Cx)}{\omega_\Phi(\sigma,y_{\Phi},Cx)-f_{an}(\sigma)\dd\sigma}.
      \end{align}
%
where the last inequality is due to the sector condition on $f_{an}$. In the following, we will prove that the last term on the RHS of \rfb{ccw_hcl_rad} is lower bounded. Notice that since $f_1 \geq0$, $f_2\geq 0$, \rfb{eq_ass_Omega_11} and \rfb{eq_ass_Omega_12} imply that $\frac{\dd f_{an}(v)}{\dd v}>\epsilon$ for some $\epsilon>0$. Hence $f_{an}$ is strictly increasing and invertible.

Consider the case when $y_{\Phi} \geq f_{an}(Cx)$ which implies also that $\Omega(y_{\Phi},Cx)\geq f_{an}^{-1}(y_{\Phi})\geq Cx$ by the definition of $\Omega$. Using the monotonicity of $\omega_{\Phi}$ we have
\begin{equation*}
\int_{Cx}^{\Omega(y_{\Phi},Cx)}{\omega_\Phi(\sigma,y_{\Phi},Cx)-f_{an}(\sigma)\dd\sigma} \geq \int_{Cx}^{\Omega(y_{\Phi},Cx)}{y_{\Phi} -f_{an}(\sigma)\dd \sigma}\geq \int_{Cx}^{f_{an}^{-1}(y_{\Phi})}{y_{\Phi} -f_{an}(\sigma)\dd \sigma}.
\end{equation*}
Define $V(y_{\Phi},Cx):=\int_{Cx}^{f_{an}^{-1}(y_{\Phi})}{y_{\Phi} -f_{an}(\sigma)\dd \sigma}$ and let $c:=\frac{y_{\Phi}-f_{an}(Cx)}{2}+f_{an}(Cx)$. It follows that $f_{an}^{-1}(y_{\Phi}) \geq f_{an}^{-1}(c)$ and $f_{an}(\sigma) \leq c$ for all $\sigma \in [Cx, f_{an}^{-1}(c)]$. Therefore
\begin{align*}
V(y_{\Phi},Cx)&\geq \int_{Cx}^{f_{an}^{-1}(c)}{y_{\Phi} -f_{an}(\sigma)\dd \sigma}\geq \int_{Cx}^{f_{an}^{-1}(c)}{y_{\Phi} -c\ \dd \sigma}\\
 &= \frac{1}{2}(y_{\Phi}-f_{an}(Cx))(f_{an}^{-1}(c)-Cx)\geq 0.
\end{align*}
Thus, $\int_{Cx}^{\Omega(y_{\Phi},Cx)}{y_{\Phi} -f_{an}(\sigma)\dd \sigma}$ is lower bounded by $V(y_{\Phi},Cx)$ which is positive definite (it is equal to zero only if $y_{\Phi}=f_{an}(Cx)$) and $V(y_{\Phi},Cx)\rightarrow \infty$ as $y_{\Phi}\rightarrow \infty$.

When $y_{\Phi}<f_{an}(Cx)$, we can obtain the same result where $\int_{Cx}^{\Omega(y_{\Phi},Cx)}{y_{\Phi} -f_{an}(\sigma)\dd \sigma}$ is lower bounded by $V(y_{\Phi},Cx)$ which is positive definite and $V(y_{\Phi},Cx)\rightarrow \infty$ as $y_{\Phi}\rightarrow -\infty$.

Therefore, using \rfb{ccw_hcl_rad}, we have
\begin{equation*}
 H_{cl}\geq \frac{1}{2}x^T(Q-\xi C^TC)x+V(y_{\Phi},Cx),
\end{equation*}
which is radially unbounded.

Now computing the time derivative of $H_{cl}$, we obtain
\begin{equation*}
\dot{H}_{cl} = \dot{V} + \dot{H}_{\circlearrowleft}-C\dot{x}y_{\Phi}-Cx\dot{y}_{\Phi} \leq -\varepsilon \dot{y}^2.
\end{equation*}
This inequality together with the radially unboundedness of $H_{cl}$ imply that the trajectory $(x,y_{\Phi})$ is bounded. Using the Lasalle's invariance principle, we conclude that the trajectory $(x,y_{\Phi})$ of \rfb{pffeedback_CCW} converges to the largest invariant set contained in $M:=\{(x,y_{\Phi})\in \rline^n \times \rline|CAx+CBy_{\Phi}=0\}$.



\end{proof}

We illustrate Theorem \ref{SOCCW-CCW} in the following simple example.

\begin{example}\label{ex2}
Consider
\begin{equation*}
\begin{array}{rl}
\calP :& \dot{x}  = -x+u,\ y  = x,\\
{\bf \Phi}:& \dot{y}_{\Phi}=f_1(y_{\Phi}(t),u_{\Phi}(t))\dot u_{\Phi+}(t) + f_2(y_{\Phi}(t),u_{\Phi}(t))\dot u_{\Phi-}(t),\\
&u= y_{\Phi},  \ u_{\Phi}=y,
\end{array}
\end{equation*}
where $x(t)\in\R$ and the functions $f_1$, $f_2$ satisfy the hypotheses in Theorem \ref{CCW_hys}. Using $Q=1$, it can be checked that \rfb{LMI11} $-$ \rfb{LMI3} hold. Using $H_{cl}$ as in the proof of Theorem \ref{SOCCW-CCW},
let us define $H_{cl}(x,y_{\Phi})=\frac{1}{2}x^2+H_{\circlearrowleft}(y_{\Phi},y)-yy_{\Phi}$ and a routine computation shows that
\begin{align*}
\dot H_{cl} & \leq \dot{y}y_{\Phi} +  \dot{\overbrace{\Phi(y)}}y -\dot{y}y_{\Phi} - y\dot{y}_{\Phi}-\dot{y}^2\\
& = -(-x+y_{\Phi})^2.
\end{align*}
Note that $Q=1$, $C=1$, so that \rfb{LMI41} holds for $\xi < 1$. This means that the result in Theorem \ref{SOCCW-CCW} holds if the anhysteresis function $f_{an}$ satisfies $(f_{an}(v)-\xi v)v \leq 0$, for all $v \in \rline$ and $\xi <1$. In other words, $f_{an}$ should belong to the sector $[0,\xi]$ for the stability of the closed-loop system.
\end{example} $\hfill \triangle$
\vspace{0.2cm}

The result in Theorem \ref{SOCCW-CCW} deals with a positive feedback interconnection of a linear system and a Duhem hysteresis operator. This is motivated by the study of an interconnection between counterclockwise systems as studied in \cite{ANGELI2006} for the general case and in \cite{Petersen2010} for the linear case. In the following result, we consider the other case where a negative feedback is used instead.

\begin{theorem}\label{SOCW-CCW}
Consider a negative feedback interconnection of a minimal single-input single-output linear system and a Duhem operator $\Phi$ as shown in Figure \ref{blockscheme} satisfying the hypotheses in Theorem \ref{CCW_hys} as follows
\begin{equation} \label{negfeedback}
\left.\begin{array}{rl}
 \calP :&\begin{array}{rl}   \dot{x}&=Ax+Bu,\\
          y&=Cx+Du,\end{array}\\
  {\bf \Phi}:& \dot{y}_{\Phi}=f_1(y_{\Phi}(t),u_{\Phi}(t))\dot u_{\Phi+}(t) + f_2(y_{\Phi}(t),u_{\Phi}(t))\dot u_{\Phi-}(t),\\
          &u= - y_{\Phi},  \ u_{\Phi}=y,
        \end{array}\right\}
\end{equation}
where $A\in \rline^{n\times n}$, $B\in\rline^{n\times 1}$, $C\in\rline^{1\times n}$ and $D\in\rline$. Assume that there exist $P=P^T>0$, $L$ and $\delta >0$ such that the following linear matrix inequalities (LMI)
\begin{equation}\label{LMI1}
P\sbm{
    1 \\
    0^{n \times 1}}
 = \sbm{
     D \\
     C^T},
 \\
\end{equation}
\begin{equation}\label{LMI2}
  \frac{1}{2} \left (P\sbm{
     0 & 0^{n\times n} \\
     B & A } + \sbm{
                                 0 & B^T \\
                                 0^{n\times n} & A^T
                               }P \right )
                                +\delta  L^TL \leq 0,
\end{equation}
hold. Then for every initial condition $(x(0),y_{\Phi}(0))$, the state trajectory of the closed-loop system \rfb{negfeedback} is bounded and converges to the largest invariant set in $\{ (x,y_{\Phi})|L \sbm{-y_{\Phi} \\x}=0 \}$.
\end{theorem}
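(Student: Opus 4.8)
The plan is to recover the fact that, under \rfb{LMI1}--\rfb{LMI2}, the linear block $\calP$ is (strictly) clockwise with a dissipation term proportional to $\norm{L\sbm{u\\x}}^{2}$, to use that $\Phi$ is counterclockwise with nonnegative storage $H_{\circlearrowleft}$ (Theorem~\ref{CCW_hys}), and then to add the two dissipation inequalities into a single nonincreasing, radially unbounded Lyapunov function for the closed loop, from which boundedness and the invariance conclusion follow by LaSalle's principle. This mirrors the proof of Theorem~\ref{SOCCW-CCW}, the only structural change being the sign of the feedback and the appearance of the direct feedthrough $D$.

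\textbf{Step 1 (the linear block is CW).} Take $V_{\calP}(w,x)=\tfrac12\sbm{w\\x}^{T}P\sbm{w\\x}$, positive definite and proper since $P=P^{T}>0$, and view $(w,x)$ as the extended state with $\dot w=q$, $\dot x=Ax+Bw$, i.e. $\tfrac{d}{dt}\sbm{w\\x}=\sbm{0&0\\B&A}\sbm{w\\x}+\sbm{1\\0^{n\times1}}q$. Then $\dot V_{\calP}=\sbm{w\\x}^{T}P\sbm{0&0\\B&A}\sbm{w\\x}+\sbm{w\\x}^{T}P\sbm{1\\0^{n\times1}}q$; the first term is $\le-\delta\norm{L\sbm{w\\x}}^{2}$ by \rfb{LMI2} after symmetrization, while by \rfb{LMI1} the second equals $\sbm{w\\x}^{T}\sbm{D\\C^{T}}q=(Cx+Dw)q=y\dot w$ (with $w=u$, $q=\dot u$, $y=Cx+Du$). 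Hence $\dot V_{\calP}\le y\dot u-\delta\norm{L\sbm{u\\x}}^{2}$, so $\calP$ satisfies the dissipation inequality characterizing the CW property (cf. Definition~\ref{de-CW}), in fact strictly.

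\textbf{Step 2 (closed-loop Lyapunov function and its decrease).} Since $\Phi$ satisfies the hypotheses of Theorem~\ref{CCW_hys}, it is CCW with $H_{\circlearrowleft}:\rline^{2}\to\rline_{+}$ obeying $\dot H_{\circlearrowleft}(y_{\Phi},u_{\Phi})\le\dot y_{\Phi}u_{\Phi}$. Along the loop $u=-y_{\Phi}$ and $u_{\Phi}=y=Cx-Dy_{\Phi}$, so set $H_{cl}(x,y_{\Phi})=V_{\calP}(-y_{\Phi},x)+H_{\circlearrowleft}(y_{\Phi},Cx-Dy_{\Phi})$. Using $\dot u=-\dot y_{\Phi}$ and $u_{\Phi}=y$,
\begin{align*}
\dot H_{cl}&\le\big(y\dot u-\delta\norm{L\sbm{-y_{\Phi}\\x}}^{2}\big)+\dot y_{\Phi}u_{\Phi}\\
&=-y\dot y_{\Phi}-\delta\norm{L\sbm{-y_{\Phi}\\x}}^{2}+\dot y_{\Phi}y=-\delta\norm{L\sbm{-y_{\Phi}\\x}}^{2}\le0,
\end{align*}
the cross terms cancelling exactly; this is precisely where the negative-feedback sign enters (contrast the $-Cxy_{\Phi}$ correction needed in Theorem~\ref{SOCCW-CCW}).

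\textbf{Step 3 (boundedness and invariance).} Because $P>0$ we have $V_{\calP}(-y_{\Phi},x)\ge\tfrac12\lambda_{\min}(P)(y_{\Phi}^{2}+\norm{x}^{2})$ and $H_{\circlearrowleft}\ge0$, so $H_{cl}$ is nonnegative and radially unbounded in $(x,y_{\Phi})$; combined with $\dot H_{cl}\le0$ this forces $(x(t),y_{\Phi}(t))$ to be bounded. Since the closed loop has finite-dimensional state $(x,y_{\Phi})$, LaSalle's invariance principle yields convergence to the largest invariant set on which $\dot H_{cl}\equiv0$, and $\dot H_{cl}\le-\delta\norm{L\sbm{-y_{\Phi}\\x}}^{2}$ forces that set inside $\{(x,y_{\Phi})\mid L\sbm{-y_{\Phi}\\x}=0\}$, which is the claim.

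\textbf{Main obstacle.} The algebra in Steps~1--2 is routine once the extended-state quadratic $V_{\calP}$ is chosen; the delicate points are (i) checking that the feedthrough $D$ is exactly absorbed by \rfb{LMI1}, so that $\sbm{w\\x}^{T}P\sbm{1\\0^{n\times1}}q$ reproduces $y\dot u$, and (ii) making the LaSalle step rigorous for the hysteretic interconnection — one must argue well-posedness of the closed loop (when $D\ne0$ the relation $\dot y_{\Phi}=f_{1}\dot u_{\Phi+}+f_{2}\dot u_{\Phi-}$ with $\dot u_{\Phi}=CAx-CBy_{\Phi}-D\dot y_{\Phi}$ is implicit in $\dot y_{\Phi}$ and must be uniquely solvable), and that the bounded trajectory has a nonempty, invariant $\omega$-limit set lying in the zero set of the dissipation term.
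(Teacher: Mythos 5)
Your proposal is correct and follows essentially the same route as the paper: the extended-state quadratic $V(w,x)=\tfrac12\sbm{w\\ x}^{T}P\sbm{w\\ x}$ giving the CW dissipation inequality for $\calP$ via \rfb{LMI1}--\rfb{LMI2}, the Lyapunov function $H_{cl}=V(-y_{\Phi},x)+H_{\circlearrowleft}(y_{\Phi},Cx-Dy_{\Phi})$ with exact cancellation of the cross terms under the negative-feedback interconnection, and LaSalle for the conclusion. Your closing remark on well-posedness of the implicit relation for $\dot y_{\Phi}$ when $D\neq0$ is a point the paper does not address, but it does not change the argument.
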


\begin{proof}
By the assumptions of the theorem, the Duhem operator $\Phi$ is CCW with the function $H_{\circlearrowleft}:\rline^2 \rightarrow \rline_+$ as given in \rfb{storage}.

Define the extended state space of the linear system in \rfb{negfeedback} by
\begin{equation}\label{exstatespace2}
 \left.\begin{array}{rl}
  \calP_{ext}: & \begin{array}{rl}   \dot{w} &=q,\\
         \dot{x}&=Ax+Bw, \\
          y&=Cx+Dw,\end{array}
        \end{array}\right\}
\end{equation}
where $w=u$.

Using $V = \frac{1}{2}[w \ x^T]^TP \left [ \begin{array}{c}
                     w \\
                     x
                   \end{array}\right ]$, a routine computation shows that
\begin{equation*}
\dot{V}    =\frac{1}{2} [\begin{array}{cc}
            w & x^T
          \end{array}]\left ( \left [\begin{array}{cc}
                        0 & B^T \\
                        0^{n \times n} & A^T
                      \end{array} \right ] P \right . + P \left . \left [\begin{array}{cc}
                        0 & 0^{n \times n}\\
                        B & A
                      \end{array} \right ]\right ) \left [\begin{array}{c}
                                                    w \\
                                                    x
                                                  \end{array}\right ]
                                                   + [\begin{array}{cc}
                                                              w & x^T
                                                                 \end{array}] P\left [\begin{array}{c}
                                                                                              1 \\
                                                                                               0^{n \times 1}
                                                                                            \end{array} \right ]q.
\end{equation*}
Using \rfb{LMI1} and \rfb{LMI2},
\begin{equation}\label{soccw_neg}
\dot{V} \leq \langle y, q\rangle - \delta \left \|L \left [\begin{array}{c}
                                                       -y_{\Phi} \\
                                                       x
                                                     \end{array}\right ]\right \|^2.
\end{equation}
This inequality \rfb{soccw_neg} with $q=\dot{u}$ (by the relation in \rfb{exstatespace2}) implies that the linear system defined in \rfb{negfeedback} is CW.

Now take $H_{cl}(x,y_{\Phi})=H_{\circlearrowleft}(y_{\Phi},Cx-Dy_{\Phi}) + V(x,y_{\Phi})$ as the Lyapunov function of the interconnected system \rfb{negfeedback}, where $H_{cl}$ is radially unbounded by the non-negativity of $H_{\circlearrowleft}$ and the properness of $V$. It is straightforward to see that
\begin{align}\nonumber
\dot{H}_{cl} &= \dot{H}_{\circlearrowleft} + \dot{V}, \\\nonumber
           &\leq \langle y, \dot{u}\rangle +\langle \dot{y}_{\Phi}, u_{\Phi} \rangle - \delta \left \|L \left [\begin{array}{c}
                                                       -y_{\Phi} \\
                                                       x
                                                     \end{array}\right ]\right \|^2,\\\label{theorem_SOCCW}
           &=- \delta \left \|L \left [\begin{array}{c}
                                                       -y_{\Phi} \\
                                                       x
                                                     \end{array}\right ]\right \|^2,
\end{align}
where the last equation is due to the interconnection conditions $u = - y_{\Phi}$ and $y = u_{\Phi}$.
It follows from \rfb{theorem_SOCCW} and from the radial unboundedness (or properness) of $H_{cl}$, the signals $x$ and $y_{\Phi}$ are bounded.

Based on the Lasalle's invariance principle \cite{LOGEMANN2004}, the semiflow $(x,y_{\Phi})$ of \rfb{negfeedback} converges to the largest invariant set contained in $M:= \{(x,y_{\Phi})\in \rline^n \times \rline|L \sbm{-y_{\Phi} \\x}=0\}$.
\end{proof}

To illustrate Theorem \ref{SOCW-CCW}, let us consider the following simple example.
\begin{example} \label{ex1}
Let
\begin{equation*}
\begin{array}{rl}
\calP: & \begin{array}{rl}\dot{x} & = -3x+u,\\
y & = -2x+u,\end{array}\\
{\bf \Phi}:& \dot{y}_{\Phi}=f_1(y_{\Phi}(t),u_{\Phi}(t))\dot u_{\Phi+}(t) + f_2(y_{\Phi}(t),u_{\Phi}(t))\dot u_{\Phi-}(t),\\
 &u= -y_{\Phi},  \ u_{\Phi}=y,
\end{array}
\end{equation*}
where $x(t)\in\R$ and the functions $f_1$, $f_2$ satisfy the hypotheses in Theorem \ref{CCW_hys}. By using $P=\sbm{1 & -2 \\-2 & 6}$, it can be checked that \rfb{LMI1} $-$ \rfb{LMI2} hold.
Following the same construction as in the proof of Theorem \ref{SOCW-CCW}, we define $H_{cl}(x,y_{\Phi})=\frac{1}{2}x^TPx+H_{\circlearrowleft}(-y_{\Phi},-2x+y_{\Phi})$ and a routine computation shows that
\begin{align*}
\dot H_{cl} & \leq -2(-3x+y_{\Phi})^2 + y\dot{y}_{\Phi} - \dot{\overbrace{\Phi(y)}}y \\
& = -2(-3x+y_{\Phi})^2.
\end{align*}
Thus, we can conclude that $(x,y_{\Phi})$ converges to the invariant set where $x=\frac{1}{3}y_{\Phi}$.
\end{example} $\hfill \triangle$

\section{Linear system with CW Duhem hysteresis}
Dual to the result that we present in the previous section, the feedback interconnection of a linear system and a CW Duhem hysteresis is considered in this section.

\begin{theorem}\label{SOCCW-CW}
Consider a negative feedback interconnection of a minimal single-input single-output linear system and a Duhem operator $\Phi$ as shown in Figure \ref{blockscheme} satisfying the hypotheses in Theorem \ref{thm2} as follows
\begin{equation} \label{pffeedback}
\left.\begin{array}{rl}
  \calP:& \begin{array}{rl} \dot{x}&=Ax+Bu,\\
          y&=Cx,\end{array}\\
  {\bf \Phi}: &       \dot{y}_{\Phi}=f_1(y_{\Phi}(t),u_{\Phi}(t))\dot u_{\Phi+}(t) + f_2(y_{\Phi}(t),u_{\Phi}(t))\dot u_{\Phi-}(t),\\
          &u= -y_{\Phi},  \ u_{\Phi}=y,
        \end{array}\right\}
\end{equation}
where $A\in \rline^{n\times n}$, $B\in\rline^{n\times 1}$ and $C\in\rline^{1\times n}$. Let $\varepsilon:= (CB)^{-1}$ where we assume $CB>0$ and assume that there exist $Q=Q^T >0$ such that
\begin{align}
\frac{1}{2}(A^TQ+QA)+\varepsilon A^TC^TCA &\leq 0, \label{LMI31} \\
QB + A^TC^T &= 0, \label{LMI32}
\end{align}
hold. Then for every initial condition $(x(0),y_{\Phi}(0))$, the state trajectory of the closed-loop system \rfb{pffeedback} is bounded and converges to the largest invariant set in $\{(x,y_{\Phi})|CAx-CBy_{\Phi}=0 \}$.
\end{theorem}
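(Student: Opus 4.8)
The plan is to mirror the proof of Theorem~\ref{SOCCW-CCW}: produce a storage‑type function for the plant certifying the strict counterclockwise property, pair it with the clockwise storage function $H_{\circlearrowright}$ of $\Phi$ supplied by Theorem~\ref{thm2}, and exploit the fact that under the negative feedback $u=-y_{\Phi}$, $u_{\Phi}=y$ the two cross terms cancel. First I would set $V(x)=\frac{1}{2}x^{T}Qx$ and observe that the linear matrix inequalities \rfb{LMI31}--\rfb{LMI32} are literally \rfb{LMI11}--\rfb{LMI21}; hence the computation already carried out in the proof of Theorem~\ref{SOCCW-CCW} (substituting $QB=-A^{T}C^{T}$ and $\varepsilon=(CB)^{-1}$) gives $\dot V\le\langle\dot y,u\rangle-\varepsilon\dot y^{2}$ with $\dot y=CAx+CBu$, so by Lemma~\ref{lemma1} the plant $\calP$ is S-CCW.

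Next, by hypothesis $\Phi$ satisfies the assumptions of Theorem~\ref{thm2}, hence $\Phi$ is CW and the function $H_{\circlearrowright}$ of \rfb{storage_cw} is nonnegative and obeys $\frac{\dd H_{\circlearrowright}(y_{\Phi}(t),u_{\Phi}(t))}{\dd t}\le \dot u_{\Phi}(t)\,y_{\Phi}(t)$ for a.e.\ $t$. I would then take the closed‑loop Lyapunov function $H_{cl}(x,y_{\Phi})=V(x)+H_{\circlearrowright}(y_{\Phi},Cx)$, using that $u_{\Phi}=y=Cx$ and therefore $\dot u_{\Phi}=\dot y$. Differentiating along solutions and inserting $u=-y_{\Phi}$,
\begin{equation*}
\dot H_{cl}=\dot V+\dot H_{\circlearrowright}\le(-\dot y\,y_{\Phi}-\varepsilon\dot y^{2})+y_{\Phi}\,\dot y=-\varepsilon\,\dot y^{2}\le 0,
\end{equation*}
since $\varepsilon=(CB)^{-1}>0$. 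It is worth noting that, unlike in Theorem~\ref{SOCCW-CCW}, no sector condition on $f_{an}$ and no third LMI are needed here: $H_{cl}$ is a sum of two nonnegative terms and is automatically bounded below, so the extra structure required in the CCW--CCW case to control the $-Cx\,y_{\Phi}$ cross term is unnecessary.

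For boundedness I would argue that $H_{cl}$ is radially unbounded on $\rline^{n}\times\rline$: $V$ is proper in $x$ because $Q>0$, while $H_{\circlearrowright}\ge 0$ and $H_{\circlearrowright}(\cdot,v)$ is radially unbounded for every $v$ by Proposition~\ref{proposHcw}; together these force $H_{cl}(x,y_{\Phi})\to\infty$ as $\|(x,y_{\Phi})\|\to\infty$. Combined with $\dot H_{cl}\le 0$ this bounds $(x(t),y_{\Phi}(t))$. Finally, by LaSalle's invariance principle the trajectory converges to the largest invariant set contained in $\{\dot H_{cl}=0\}\subseteq\{\dot y=0\}=\{(x,y_{\Phi})\mid CAx-CBy_{\Phi}=0\}$, using $\dot y=C\dot x=CAx+CBu=CAx-CBy_{\Phi}$; this is exactly the set in the statement.

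The only step that is more than bookkeeping is the radial unboundedness of $H_{cl}$. Since $V$ pins down only the $x$-direction, all growth in the $y_{\Phi}$-direction must come from $H_{\circlearrowright}(y_{\Phi},Cx)$, so one has to lean on Proposition~\ref{proposHcw} and make sure its lower bound is effective \emph{uniformly} over the bounded range of $v=Cx$ that occurs while $x$ stays in a bounded set; once this is secured, everything else is a direct transcription of the arguments already used for Theorems~\ref{SOCCW-CCW} and \ref{SOCW-CCW}.
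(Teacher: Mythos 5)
Your proposal is correct and follows essentially the same route as the paper's own proof: the same $V(x)=\frac{1}{2}x^TQx$ certifying that $\calP$ is S-CCW via Lemma \ref{lemma1}, the same Lyapunov function $H_{cl}(x,y_{\Phi})=V(x)+H_{\circlearrowright}(y_{\Phi},Cx)$ with the cross terms cancelling under $u=-y_{\Phi}$, $u_{\Phi}=y$, radial unboundedness via Proposition \ref{proposHcw}, and LaSalle for the invariant set $\{CAx-CBy_{\Phi}=0\}$. Your closing remark about making the lower bound from Proposition \ref{proposHcw} uniform over the bounded range of $v=Cx$ is a fair point of care that the paper glosses over, but it does not change the argument.
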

\vspace{0.2cm}
\begin{proof}
Let $V(x)=\frac{1}{2}x^TQx$, and using \rfb{LMI31}$-$\rfb{LMI32}, it can be checked that
\begin{align*}
\dot{V} &= \frac{1}{2}x^T(A^TQ+QA)x+x^TQBu\\
&\leq \langle\dot{y}, u \rangle - \varepsilon \dot{y}^2.
\end{align*}
It follows from Lemma \ref{lemma1} that the linear system is S-CCW.

By the assumptions of the theorem, the Duhem operator $\Phi$ is CW with the function $H_{\circlearrowright} : \rline^2 \rightarrow \rline_+$ as given in \rfb{storage_cw}.

Now let $H_{cl}(x,y_{\Phi})=V(x) +H_{\circlearrowright}(y_{\Phi},Cx)$ as the Lyapunov function of the system \rfb{pffeedback}. According to Proposition \ref{proposHcw}, $H_{\circlearrowright}(y_{\Phi}, Cx)$ is radially unbounded for every $x$, which implies that $H_{cl}(x,y_{\Phi})$ is radially unbounded.

Computing the time derivative of $H_{cl}$, we obtain
\begin{equation*}
\dot{H}_{cl} = \dot{V} + \dot{H}_{\circlearrowleft} \leq -\varepsilon \dot{y}^2.
\end{equation*}
This inequality together with the radially unboundedness of $H_{cl}$ imply that the trajectory $(x,y_{\Phi})$ is bounded. Using the Lasalle's invariance principle, we conclude that the trajectory $(x,y_{\Phi})$ of \rfb{pffeedback} converges to the largest invariant set contained in $M:=\{(x,y_{\Phi})\in \rline^n \times \rline|CAx-CBy_{\Phi}=0\}$.

\end{proof}
To illustrate Theorem \ref{SOCCW-CW} we could use the same linear system as given in the Example \ref{ex2}.
\begin{example}
\begin{equation*}
 \begin{array}{rl}
\calP:& \dot{x}  = -x+u,\ y  = x,\\
{\bf \Phi}: & \dot{y}_{\Phi}=f_1(y_{\Phi}(t),u_{\Phi}(t))\dot u_{\Phi+}(t) + f_2(y_{\Phi}(t),u_{\Phi}(t))\dot u_{\Phi-}(t),\\
&u= -y_{\Phi},  \ u_{\Phi}=y,\end{array}
\end{equation*}
where $x(t)\in\R$ and the functions $f_1$, $f_2$ satisfy the hypotheses in Theorem \ref{thm2}. Using $Q=1$, it can be checked that \rfb{LMI31} and \rfb{LMI32} hold. Define $H_{cl}(x,y_{\Phi})=\frac{1}{2}x^2+H_{\circlearrowright}(y_{\Phi},y)$, a routine computation shows that
\begin{align*}
\dot H_{cl} & \leq \dot{y}y_{\Phi} -  \dot{y}y_{\Phi} -\dot{y}^2\\
& = -(-x+y_{\Phi})^2,
\end{align*}
which implies that $(x,y_{\Phi})$ converges to the invariant set where $x=y_{\Phi}$.
\end{example} $\hfill \triangle$
\vspace{0.2cm}

\begin{theorem}\label{CW-CW}
Consider a positive feedback interconnection of a minimal single-input single-output linear system and a Duhem operator $\Phi$ as shown in Figure \ref{blockscheme} satisfying the hypotheses in Theorem \ref{thm2} as follows
\begin{equation} \label{pffeedbackCW}
\left. \begin{array}{rl}
\calP: &\begin{array}{rl}  \dot{x}&=Ax+Bu,\\
          y&=Cx+Du,\end{array}\\
 {\bf \Phi}: &  \dot{y}_{\Phi}=f_1(y_{\Phi}(t),u_{\Phi}(t))\dot u_{\Phi+}(t) + f_2(y_{\Phi}(t),u_{\Phi}(t))\dot u_{\Phi-}(t),\\
          &u=  y_{\Phi},  \ u_{\Phi}=y,
        \end{array}\right \}
\end{equation}
where $A\in \rline^{n\times n}$, $B\in\rline^{n\times 1}$, $C\in\rline^{1\times n}$ and $D\in\rline$. Assume that there exist $P$, $L$, $\delta$ and $\eta >0$ such that $P=P^T>\eta \sbm{D^2&DC\\C^TD&C^TC}\geq 0$ and the following linear matrix inequalities (LMI)
\begin{equation}\label{LMI41}
P\sbm{
    1 \\
    0^{n \times 1}}
 = \sbm{
     D \\
     C^T},
 \\
\end{equation}
\begin{equation}\label{LMI42}
  \frac{1}{2} \left (P\sbm{
     0 & 0^{n\times n} \\
     B & A } + \sbm{
                                 0 & B^T \\
                                 0^{n\times n} & A^T
                               }P \right )
                                +\delta L^TL \leq 0,
\end{equation}
hold. Assume further that $f_1(\gamma, v) \leq \frac{\eta}{2}$ and $f_2(\gamma, v)\leq \frac{\eta}{2}$ for all $(\gamma,v)\in \rline^2$. Then for every initial condition $(x(0),y_{\Phi}(0))$, the state trajectory of  the closed-loop system \rfb{pffeedbackCW} is bounded and converges to the largest invariant set in $\{ (x,y_{\Phi})|L \sbm{y_{\Phi} \\x}=0 \}$.
\end{theorem}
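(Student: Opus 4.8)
The plan is to mirror the proof of Theorem~\ref{SOCW-CCW}, this time pairing the CW linear plant with the CW Duhem operator. First I would augment $\calP$ to the extended state $(w,x)$ with $\dot w=q$, $\dot x=Ax+Bw$, $y=Cx+Dw$; under the positive-feedback law $u=y_\Phi$, $u_\Phi=y$ we have $w=u=y_\Phi$. Taking $V=\tfrac12[w\ x^T]P\sbm{w\\x}$ and substituting \rfb{LMI41}--\rfb{LMI42} into $\dot V$, exactly as in Theorem~\ref{SOCW-CCW}, gives $\dot V\le\langle y,\dot u\rangle-\delta\|L\sbm{y_\Phi\\x}\|^2$, so the linear block is CW. By the hypotheses of Theorem~\ref{thm2}, $\Phi$ is CW with the nonnegative storage function $H_{\circlearrowright}$ of \rfb{storage_cw}, i.e.\ $\dot H_{\circlearrowright}\le\langle y_\Phi,\dot u_\Phi\rangle$. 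I would then use as closed-loop Lyapunov function
\begin{equation*}
H_{cl}(x,y_\Phi)=V(x,y_\Phi)+H_{\circlearrowright}\bigl(y_\Phi,\,Cx+Dy_\Phi\bigr)-\bigl(Cx+Dy_\Phi\bigr)y_\Phi ,
\end{equation*}
and, since $\dot u=\dot y_\Phi$, $u_\Phi=y$ and $\langle y,\dot y_\Phi\rangle+\langle y_\Phi,\dot y\rangle=\tfrac{d}{dt}(yy_\Phi)$, the cross terms cancel, giving $\dot H_{cl}\le-\delta\|L\sbm{y_\Phi\\x}\|^2\le0$.

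The main obstacle is the radial unboundedness of $H_{cl}$, and here the hypotheses $P>\eta\sbm{D^2&DC\\C^TD&C^TC}$ and $0\le f_1,f_2\le\tfrac\eta2$ enter. Since $\sbm{D^2&DC\\C^TD&C^TC}=\sbm{D\\C^T}[D\ C]$, the matrix inequality says exactly that $2V(x,y_\Phi)-\eta y^2>0$; more strongly $V(x,y_\Phi)-\tfrac\eta2 y^2\ge c'(\|x\|^2+y_\Phi^2)$ with $c'=\tfrac12\lambda_{\min}\!\bigl(P-\eta\sbm{D^2&DC\\C^TD&C^TC}\bigr)>0$. Writing $H_{cl}=\bigl(V-\tfrac\eta2 y^2\bigr)+\bigl(\tfrac\eta2 y^2+H_{\circlearrowright}(y_\Phi,y)-yy_\Phi\bigr)$, it therefore suffices to establish the pointwise bound $\tfrac\eta2 v^2+H_{\circlearrowright}(\gamma,v)-\gamma v\ge0$ for all $(\gamma,v)\in\rline^2$.

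To get this bound I would use that $0\le f_1,f_2\le\tfrac\eta2$ forces the traversing curve $\omega_\Phi(\cdot,\gamma,v)$ (which passes through $(v,\gamma)$) and, together with \rfb{eq_ass_Omega_21}--\rfb{eq_ass_Omega_22}, the anhysteresis function $f_{an}$ (which is monotone increasing and passes through $(0,0)$) to be $\tfrac\eta2$-Lipschitz. Rewriting \rfb{storage_cw} as
\begin{equation*}
H_{\circlearrowright}(\gamma,v)=\int_0^v f_{an}(\sigma)\dd\sigma+\Bigl|\int_v^{\Lambda(\gamma,v)}\bigl(\omega_\Phi(\sigma,\gamma,v)-f_{an}(\sigma)\bigr)\dd\sigma\Bigr| ,
\end{equation*}
where the inner integral has the sign of $\gamma-f_{an}(v)$ because $\omega_\Phi-f_{an}$ vanishes at $\Lambda$ and $\Lambda\le v$ iff $\gamma\ge f_{an}(v)$, an elementary triangle-area estimate with these Lipschitz constants gives $\int_0^v f_{an}\ge f_{an}(v)^2/\eta$ and $\bigl|\int_v^{\Lambda}(\omega_\Phi-f_{an})\bigr|\ge(\gamma-f_{an}(v))^2/\eta$, hence $H_{\circlearrowright}(\gamma,v)\ge\tfrac1\eta\bigl(f_{an}(v)^2+(\gamma-f_{an}(v))^2\bigr)$. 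Completing the square then yields $\tfrac\eta2 v^2+H_{\circlearrowright}(\gamma,v)-\gamma v\ge\tfrac1\eta\bigl(\gamma-f_{an}(v)-\tfrac\eta2 v\bigr)^2+\tfrac1\eta\bigl(f_{an}(v)-\tfrac\eta2 v\bigr)^2\ge0$, so $H_{cl}\ge c'(\|x\|^2+y_\Phi^2)$ is radially unbounded.

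Finally, $H_{cl}$ radially unbounded together with $\dot H_{cl}\le-\delta\|L\sbm{y_\Phi\\x}\|^2\le0$ gives boundedness of $(x,y_\Phi)$, and LaSalle's invariance principle then yields convergence to the largest invariant set in $\{(x,y_\Phi):L\sbm{y_\Phi\\x}=0\}$. I expect the delicate part to be the Lipschitz/triangle-area lower bound on $H_{\circlearrowright}$ and the case analysis ($v\gtrless0$, $\gamma\gtrless f_{an}(v)$) needed to make the rewriting of \rfb{storage_cw} and the sign of $\omega_\Phi-f_{an}$ rigorous; the extended-state LMI computation and the derivative cancellation follow Theorem~\ref{SOCW-CCW} essentially verbatim.
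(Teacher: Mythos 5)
Your proposal is correct and its skeleton coincides with the paper's proof: the same extended system and quadratic $V$ yielding $\dot V\le\langle y,\dot u\rangle-\delta\|L\sbm{y_\Phi\\x}\|^2$ via \rfb{LMI41}--\rfb{LMI42}, the same Lyapunov function $H_{cl}=V+H_{\circlearrowright}(y_\Phi,y)-yy_\Phi$, the same cancellation giving $\dot H_{cl}\le-\delta\|L\sbm{y_\Phi\\x}\|^2$, the same extraction of $\tfrac\eta2 y^2$ from $V$ using $P>\eta\sbm{D^2&DC\\C^TD&C^TC}$, and LaSalle at the end. Where you genuinely diverge is the key lower bound on the hysteresis part $\tfrac\eta2 y^2+H_{\circlearrowright}-yy_\Phi$. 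The paper works directly from \rfb{storage_cw}: it splits off $\int_0^{\Lambda}(f_{an}-\omega_\Phi)\,\dd\sigma\ge0$ and then substitutes the integral representation $\omega_\Phi(\sigma,y_\Phi,u_\Phi)=y_\Phi+\int_{u_\Phi}^\sigma f_2\,\dd s$ to turn the remaining term into $\int_0^{u_\Phi}\int_\sigma^{u_\Phi}(\tfrac\eta2-f_2)\,\dd s\,\dd\sigma+\tfrac\eta4 u_\Phi^2\ge0$, using $f_2\le\tfrac\eta2$ pointwise. You instead prove a quantitative lower bound $H_{\circlearrowright}(\gamma,v)\ge\tfrac1\eta\bigl(f_{an}(v)^2+(\gamma-f_{an}(v))^2\bigr)$ by triangle-area estimates and then complete the square. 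Your route is valid (I checked the area estimates and the square-completion), and it buys a slightly stronger, explicitly quadratic lower bound on $H_{cl}$; but it leans on $f_{an}$ being $\tfrac\eta2$-Lipschitz, which holds only because \rfb{eq_ass_Omega_21}--\rfb{eq_ass_Omega_22} combined with $f_1,f_2\le\tfrac\eta2$ force $\dd f_{an}/\dd v<\tfrac\eta2$ — a step you should state explicitly, since the theorem hypotheses bound $f_1,f_2$ but not $f_{an}'$ directly. The paper's double-integral argument avoids any Lipschitz condition on $f_{an}$ and needs only the sign information encoded in $\Lambda$, so it is marginally more economical; your version makes the radial unboundedness in $y_\Phi$ more transparent.
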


\begin{proof}
Define an extended system $\calP_{ext}$ as in \rfb{exstatespace2} and let $V(w,x)= \frac{1}{2}\sbm{w&\ x^T}P\sbm{w\\x}$. Using \rfb{LMI41}, \rfb{LMI42} and \rfb{exstatespace2}, we have
\begin{equation}\label{scw}
\dot{V} \leq \langle y, \dot{u}\rangle - \delta \left \|L \left [\begin{array}{c}
                                                       y_{\Phi} \\
                                                       x
                                                     \end{array}\right ]\right \|^2.
\end{equation}
Equation \rfb{scw} indicates that the linear system is CW.

Next we take $H_{cl}(x,y_{\Phi})=H_{\circlearrowright}(y_{\Phi},y)+V(y_{\Phi},x)-yy_{\Phi}$ as the Lyapunov function of the interconnected system. We will show first that $H_{cl}$ is lower bounded. Using the definition of $V$ as above and $H_{\circlearrowright}$ as in \rfb{storage_cw}, we have
\begin{equation*}
H_{cl}= \frac{1}{2}\bbm{w&\ x^T}P\bbm{w\\x}+\int^{\Lambda(y_{\Phi},u_{\Phi})}_0{f_{an}(\sigma)-\omega_{\Phi}(\sigma,y_{\Phi},u_{\Phi})\dd \sigma}+ \int^{u_{\Phi}}_0{\omega_{\Phi}(\sigma, y_{\Phi}, u_{\Phi}) \dd \sigma} -u_{\Phi}y_{\Phi}.
\end{equation*}
Since $u_{\Phi}=y$ (by the interconnection), $u_{\Phi}^2=\sbm{w&\ x^T}\sbm{D^2&DC\\C^TD&C^TC}\sbm{w\\x}$. By the assumption on $P$, there exists $\eta, \varepsilon > 0$ such that $P-\eta\sbm{D^2&DC\\C^TD&C^TC}>\varepsilon I$. Then
\begin{align}\nonumber
H_{cl}&=  \frac{1}{2}\bbm{w&\ x^T}\left(P-\eta\bbm{D^2&DC\\C^TD&C^TC}\right)\bbm{w\\x}+ \frac{\eta}{2}u_{\Phi}^2+\int^{\Lambda(y_{\Phi},u_{\Phi})}_0{f_{an}(\sigma)-\omega_{\Phi}(\sigma,y_{\Phi},u_{\Phi})\dd \sigma} \\\nonumber
&+\int^{u_{\Phi}}_0{\omega_{\Phi}(\sigma, y_{\Phi}, u_{\Phi}) \dd \sigma} -u_{\Phi}y_{\Phi}\\ \label{lbcw-cw}
& \geq \frac{\varepsilon}{2}\left \|\bbm{w\\x}\right \|^2+\int^{\Lambda(y_{\Phi},u_{\Phi})}_0{f_{an}(\sigma)-\omega_{\Phi}(\sigma,y_{\Phi},u_{\Phi})\dd \sigma}+ \int^{u_{\Phi}}_0{\left(\omega_{\Phi}(\sigma, y_{\Phi}, u_{\Phi})-y_{\Phi}+\frac{\eta}{2}u_{\Phi}\right)\dd \sigma}
\end{align}
It can be checked that the second term of \rfb{lbcw-cw} is nonnegative. Indeed, it follows from the property of the CW intersecting function $\Lambda$ that if $\Lambda(y_{\Phi},u_{\Phi})\geq 0$ we have that $f_{an}(\sigma) \geq \omega_{\Phi}(\sigma,y_{\Phi},u_{\Phi})$ for all $\sigma \in [0, \Lambda(y_{\Phi},u_{\Phi})]$ and if $\Lambda(y_{\Phi},u_{\Phi})< 0$ then $f_{an}(\sigma) \leq \omega_{\Phi}(\sigma,y_{\Phi},u_{\Phi})$ for all $\sigma \in [\Lambda(y_{\Phi},u_{\Phi}), 0]$.

To check whether the last term of \rfb{lbcw-cw} is lower bounded, we use the definition of $\omega_{\Phi}$ given in the Section \ref{DuhemCCW}. Consider the case $u_{\Phi} \geq 0$. Using the definition of $\omega_{\Phi}$ in \rfb{wcurve}, the last term of \rfb{lbcw-cw} can be written by
\begin{align*}
&\int^{u_{\Phi}}_0{\left(\omega_{\Phi}(\sigma, y_{\Phi}, u_{\Phi})-y_{\Phi}+\frac{\eta}{2}u_{\Phi}\right) \dd \sigma}\\
&\ =\int^{u_{\Phi}}_0{\left(y_{\Phi}+\int^{\sigma}_{u_{\Phi}}{f_2(\omega_{\Phi}(s,y_{\Phi},u_{\Phi}),s) \dd s}\right) \dd \sigma}+ \int_{0}^{u_{\Phi}}{\frac{\eta}{2}u_{\Phi}-y_{\Phi}\dd \sigma }\\
& \ =\int^{u_{\Phi}}_0{\int^{u_{\Phi}}_{\sigma}{\frac{\eta}{2}-f_2(\omega_{\Phi}(s,y_{\Phi},u_{\Phi}),s) \dd s} \dd \sigma}+\frac{\eta}{4}u_{\Phi}^2\geq 0,
\end{align*}
where the last inequality is due to fact that $f_2(\gamma, v) \leq \frac{\eta}{2}$ for all $(\gamma,v)\in \rline^2$. In a similar way, we can obtain the non-negativity of $\int^{u_{\Phi}}_{0}{(\omega_{\Phi}(\sigma, y_{\Phi}, u_{\Phi})-y_{\Phi}+\frac{\eta}{2}u_{\Phi}) \dd \sigma }$ for the case $u_{\Phi} < 0$. Therefore, \rfb{lbcw-cw} implies that $H_{cl}$ is lower bounded and radially unbounded.

It can be computed that
\begin{equation}\label{Hclcase4}
 \dot{H}_{cl} =\dot{V}+\dot{H}_{\circlearrowright}-\dot{y}y_{\Phi}-y\dot{y}_{\Phi}\leq - \delta \left \|L \bbm{y_{\Phi} \\x}\right \|^2.
 \end{equation}
Hence, by the radially unboundedness of $H_{cl}$, \rfb{Hclcase4} implies that $(x,y_{\Phi})$ is bounded. Using the Lasalle's invariance principle, we can conclude that the trajectory $(x,y_{\Phi})$ of \rfb{pffeedbackCW} converges to the largest invariant set contained in $M:= \{(x,y_{\Phi})\in \rline^n \times \rline|L \sbm{y_{\Phi} \\x}=0\}$.
\end{proof}

To illustrate Theorem \ref{CW-CW}, let us consider the Example \ref{ex1}, where we replace the negative feedback interconnection by a positive one.
\begin{example} \label{ex3}
\begin{equation*}
\begin{array}{rl}
\calP:& \begin{array}{rl}\dot{x} & = -3x+y_{\Phi},\\
y & = -2x+y_{\Phi},\end{array}\\
{\bf \Phi}:& \dot{y}_{\Phi}=f_1(y_{\Phi}(t),u_{\Phi}(t))\dot u_{\Phi+}(t) + f_2(y_{\Phi}(t),u_{\Phi}(t))\dot u_{\Phi-}(t)\\
&u= y_{\Phi},  \ u_{\Phi}=y,\end{array}
\end{equation*}
where $x(t)\in\R$ and the functions $f_1$, $f_2$ satisfy the hypotheses in Theorem \ref{thm2}. By using $P=\sbm{1 & -2 \\-2 & 6}$, the conditions in \rfb{LMI41} and \rfb{LMI42} hold with $L=\sbm{1&-3}$ and $\delta=2$. Also $P=P^T>\frac{1}{2} \sbm{1 & -2 \\ -2 & 4}$, i.e., $\eta=\frac{1}{2}$.
Using $H_{cl}(x,y_{\Phi})=\frac{1}{2}\sbm{y_{\Phi}& x^T}P\sbm{y_{\Phi}\\x}+H_{\circlearrowright}(y_{\Phi},-2x+y_{\Phi})-yy_{\Phi}$, routine computation shows that
\begin{equation*}
\dot H_{cl}  \leq -2(-3x+y_{\Phi})^2.
\end{equation*}

Hence, if $f_1(\gamma,v) \leq \frac{1}{4}$ and $f_2(\gamma, v)\leq \frac{1}{4}$  for all $(\gamma,v) \in \rline^2 $, then $(x,y_{\Phi})$ converges to the invariant set where $x=-\frac{1}{3}y_{\Phi}$ following Theorem \ref{CW-CW}.
\end{example} $\hfill \triangle$
\vspace{0.2cm}

\begin{figure}[h]
\centering \psfrag{P}{${\bf \Phi}$} \psfrag{C}{$\mathbf{C}$}
\psfrag{G}{$\mathbf{G}$}
 \subfigure[] {{\includegraphics[height=0.9in]{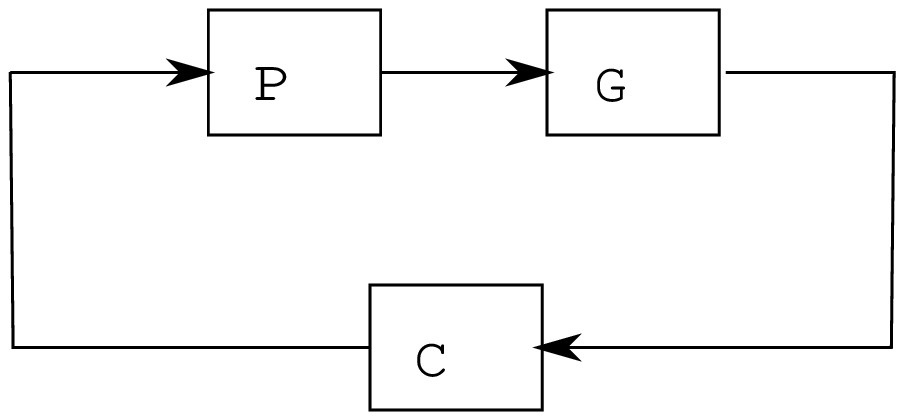}}}
 \subfigure[] {{\includegraphics[height=0.9in]{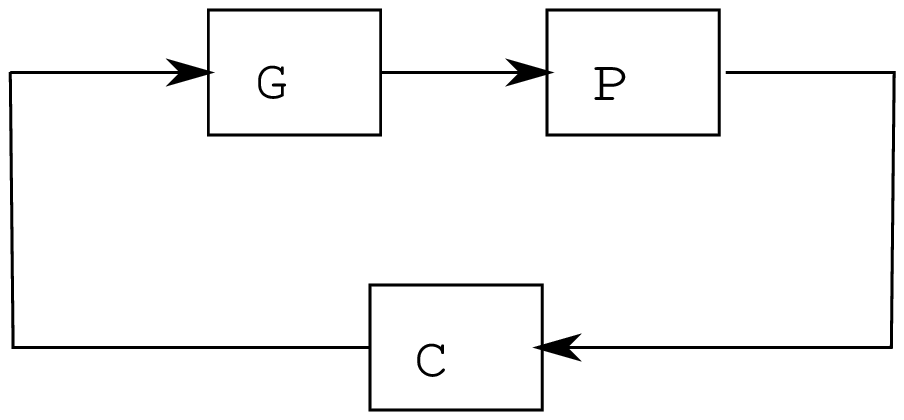}}}
\caption{Feedback interconnection of a linear plant $\mathbf{G}$, controller $\mathbf{C}$ and hysteresis operator ${\bf \Phi}$. (a) An interconnection example where the plant $\mathbf{G}$ is driven by a hysteretic actuator $\mathbf{\Phi}$; (b) An interconnection example where the dynamics of $\mathbf{G}$ is measured by a hysteretic sensor $\mathbf{\Phi}$.}
\label{controlsys} \vspace{0.2cm}
\end{figure}
\section{Controller design}\label{control-design}

The stability analysis given in the previous sections can be used to design a controller for a linear plant with hysteretic sensor/actuator. Consider the closed-loop system as shown in Figure \ref{controlsys}, where $\mathbf{G}$ and $\mathbf{C}$ are the linear plant and controller, respectively, and they are given by
\begin{equation}\label{CandG}
\mathbf{G}: \left\{\begin{array}{rl}
         \dot{x}_{G}&=A_{G}x_{G} + B_{G}u_G, \\
          y_{G}&=C_{G}x_{G}+D_{G}u_G,
        \end{array}\right. \
\mathbf{C}: \left\{\begin{array}{rl}
         \dot{x}_C&=A_Cx_C + B_Cu_C, \\
          y_C&=C_Cx_C+D_Cu_C.
        \end{array}\right.
\end{equation}
Thus depending on the location of the hysteretic element, the cascaded linear systems can be compactly written into
\begin{equation}\label{CoG}
\left.\begin{array}{rl}
         \dot{x} &= Ax+Bu \\
          y&=Cx+Du,
        \end{array}\right.
\end{equation}
where $x=\sbm{x_G\\x_C}$ and for the case of hysteretic actuator as shown in Figure \ref{controlsys}(a), $A=\sbm{A_G &0\\B_CC_G& A_C}$, $B=\sbm{B_G\\B_CD_G}$, $C=\sbm{D_CC_G & C_C}$, $D=D_CD_G$, or for the case of hysteretic sensor as shown in Figure \ref{controlsys}(b), $A=\sbm{A_G &B_GC_C\\0& A_C}$, $B=\sbm{B_GD_C\\B_C}$, $C=\sbm{C_G & D_GC_C}$, $D=D_GD_C$.
The controller design can then be carried out as follows.
\begin{itemize}
              \item {\it Control design algorithm for the case of CCW $\Phi$:}
\begin{enumerate}
  \item Determine the anhysteresis function $f_{an}$ of the Duhem operator $\Phi$ and possibly, the desired $L$.
  \item Find $\mathbf{C}$ such that either \rfb{LMI11}-\rfb{LMI3} or \rfb{LMI1}-\rfb{LMI2} holds.
  \item If \rfb{LMI11}-\rfb{LMI3} is solvable, then $\mathbf{C}$ stabilizes the closed-loop system with a negative feedback interconnection; otherwise
  \item If \rfb{LMI1}-\rfb{LMI2} is solvable, then $\mathbf{C}$ stabilizes the closed-loop system with a positive feedback interconnection.
\end{enumerate}
              \item {\it Control design algorithm for the case of CW $\Phi$:}
\begin{enumerate}
  \item Determine the functions $f_{1}$ and $f_{2}$ of the Duhem operator $\Phi$ and possibly, the desired $L$.
  \item Find $\mathbf{C}$ such that either \rfb{LMI31}-\rfb{LMI32} or \rfb{LMI41}-\rfb{LMI42} holds.
  \item If \rfb{LMI31}-\rfb{LMI32} is solvable, then $\mathbf{C}$ stabilizes the closed-loop system with a negative feedback interconnection; otherwise
  \item If \rfb{LMI41}-\rfb{LMI42} is solvable, then $\mathbf{C}$ stabilizes the closed-loop system with a positive feedback interconnection.
\end{enumerate}
\end{itemize}

Putting \rfb{CoG} into the setting of our main results in Theorem \ref{SOCW-CCW}, \ref{SOCCW-CCW}, \ref{SOCCW-CW} and \ref{CW-CW}, the invariant set is contained in $M:=\{(x_G,x_C,y_{\Phi})|N\sbm{x_G\\x_C \\y_{\Phi}}=0\}$ where the matrix $N$ can also become a design parameter for determining $\mathbf{C}$.

\section{Numerical examples}
\begin{figure}[h]
\centering \psfrag{x}{$x$} \psfrag{k}{$k$}\psfrag{PTZ}{$\Phi$}
\psfrag{b}{$b$}\psfrag{F}{ }\psfrag{m}{$m$}
  \includegraphics[height=0.9in]{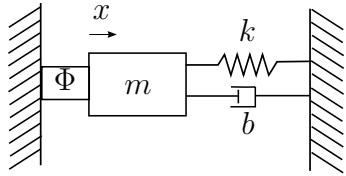}\\
  \caption{Mass-damper-spring system connected with a hysteretic actuator}\label{piezosys}
\end{figure}
As an example, we consider a mass-damper-spring system with a hysteretic actuator denoted by $\Phi$, as shown in Figure \ref{piezosys}, where $m$ is the mass, $b$ is the damping constant, $k$ is the spring constant and $x$ denotes the displacement of the mass.  Let $m=1$, $b=2$ and $k=1$, then the mass-damper-spring system is given by
\begin{align}
\dot{x}&= \left(
              \begin{array}{cc}
                0 & 1 \\
                -1 & -2 \\
              \end{array}
            \right)
x+\left(
      \begin{array}{c}
        0 \\
        1 \\
      \end{array}
    \right)
u, \nonumber \\
y&=\left(
       \begin{array}{cc}
         1 & 0 \\
       \end{array}
     \right)
x+ u. \label{mds}
\end{align}

\subsection{CCW hysteretic actuator}

Let us first consider the case when the hysteretic actuator has CCW I/O dynamics, such as piezo-actuators \cite{CHIHJER2006}. Assume that the actuator is represented by the Duhem operator \rfb{babuskamodel} where
\begin{equation}\label{exf1f2}
f_1(\gamma, v) =-\gamma+0.475v+0.3,\ f_2(\gamma, v) =\gamma-0.475v+0.3, \  \forall (\gamma, v)\in \rline^2.
\end{equation}
It can be verified that $f_{an}(v)=0.475v$ and the functions $f_1$ and $f_2$ satisfy the hypotheses given in Theorem \ref{CCW_hys}.

With $A_c=\sbm{0  & 1 \\ -2 & -4}$, $B_c=\sbm{0 \\1}$, $C_c=\sbm{-1.5& -2}$ and $D_c=1$, conditions \rfb{LMI1}-\rfb{LMI2} are solvable with $P=\sbm{1 & 1&0&-1.5 &-2\\ 1&7.74&5.51&-8.74&-15.86\\0&5.51&7.4&-5.51&-14.36\\-1.5&-8.74&-5.51&10.24&17.86\\-2&-15.86&-14.36&17.86&38.36}$ and $L=[0\ 0\ 1/4\ 0\ 0]$. Hence the controller $\mathbf{C}$ can stabilize the closed-loop system with negative feedback interconnection. In this case, $N=[0\ 1/4\ 0\ 0\ 0]$. According to Theorem \ref{SOCW-CCW}, the velocity of the mass-damper-spring system converges to zero and the position of the mass-damper-spring system converges to a constant. The closed-loop system is simulated in Matlab/Simulink with the initial condition $x(0)= [-10\ 5]^T$ and the results are shown in Figure \ref{ng_mds_sim}(a).

On the other hand, since we have $f_{an}(v)=0.475v$, then by taking $A_c=\sbm{0& 1 \\ -2& -4}$, $B_c=\sbm{0 \\1}$, $C_c=\sbm{1&1}$ and $D_c=0$, it can be checked that \rfb{LMI11}-\rfb{LMI21} holds with $\xi=0.5$ and $Q=\sbm{6& 1&-6&-2 \\ 1&4&-1&-4\\-6&-1&7&3\\-2&-4&3& 7}$. In this case $N=\sbm{1&0&-2&-3&1}$. Moreover, $f_{an}$ belongs to the sector $[0, 0.5]$. Similar to the previous case, it follows from Theorem \ref{SOCCW-CCW} that the velocity of the mass-damper-spring system converges to zero and the position of the mass-damper-spring system converges to a constant. The simulation results is shown in Figure \ref{ng_mds_sim}(b).
%
%
\begin{figure}[h]
\centering
 \subfigure[] {{\includegraphics[height=2in]{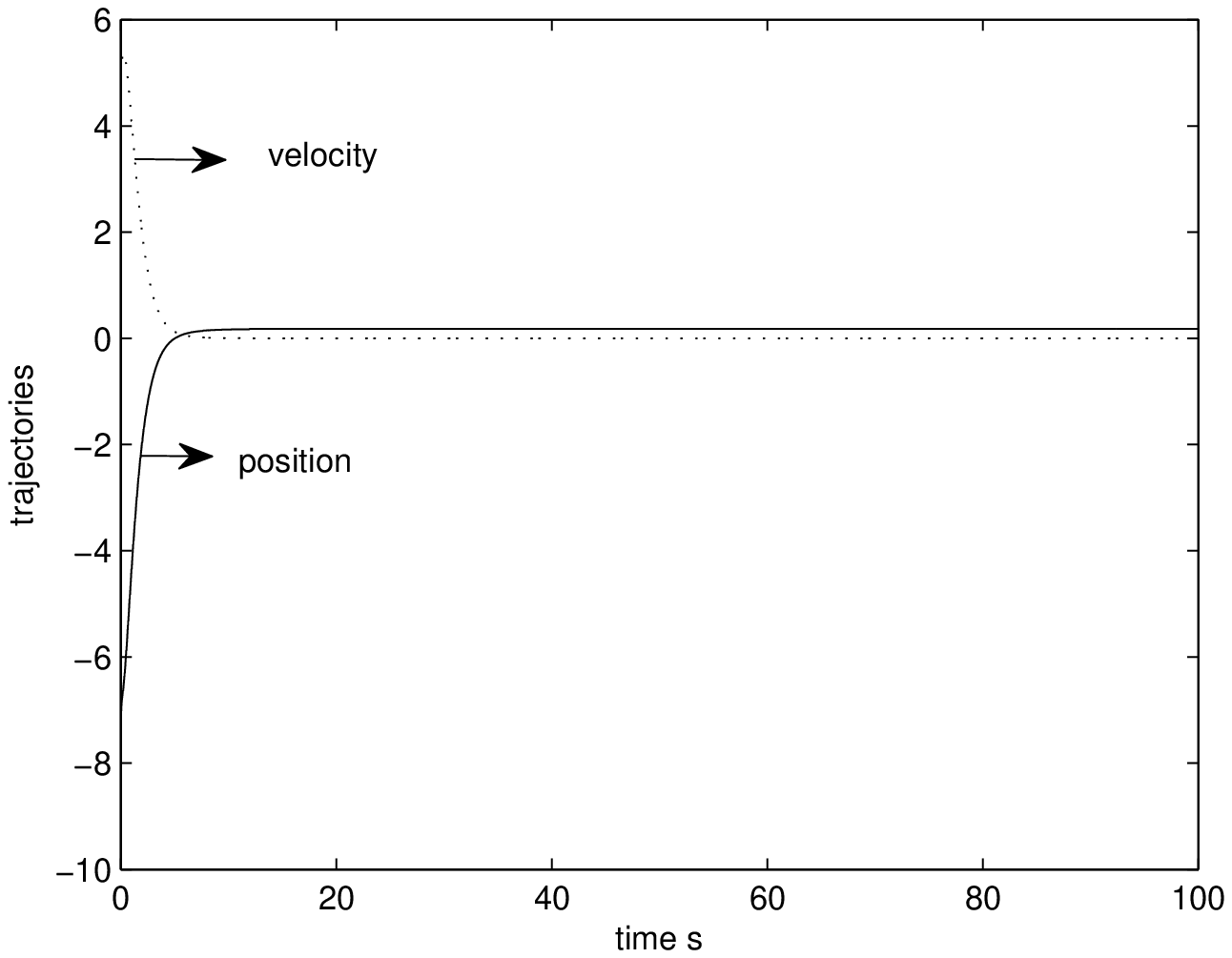}}}
 \subfigure[] {{\includegraphics[height=2in]{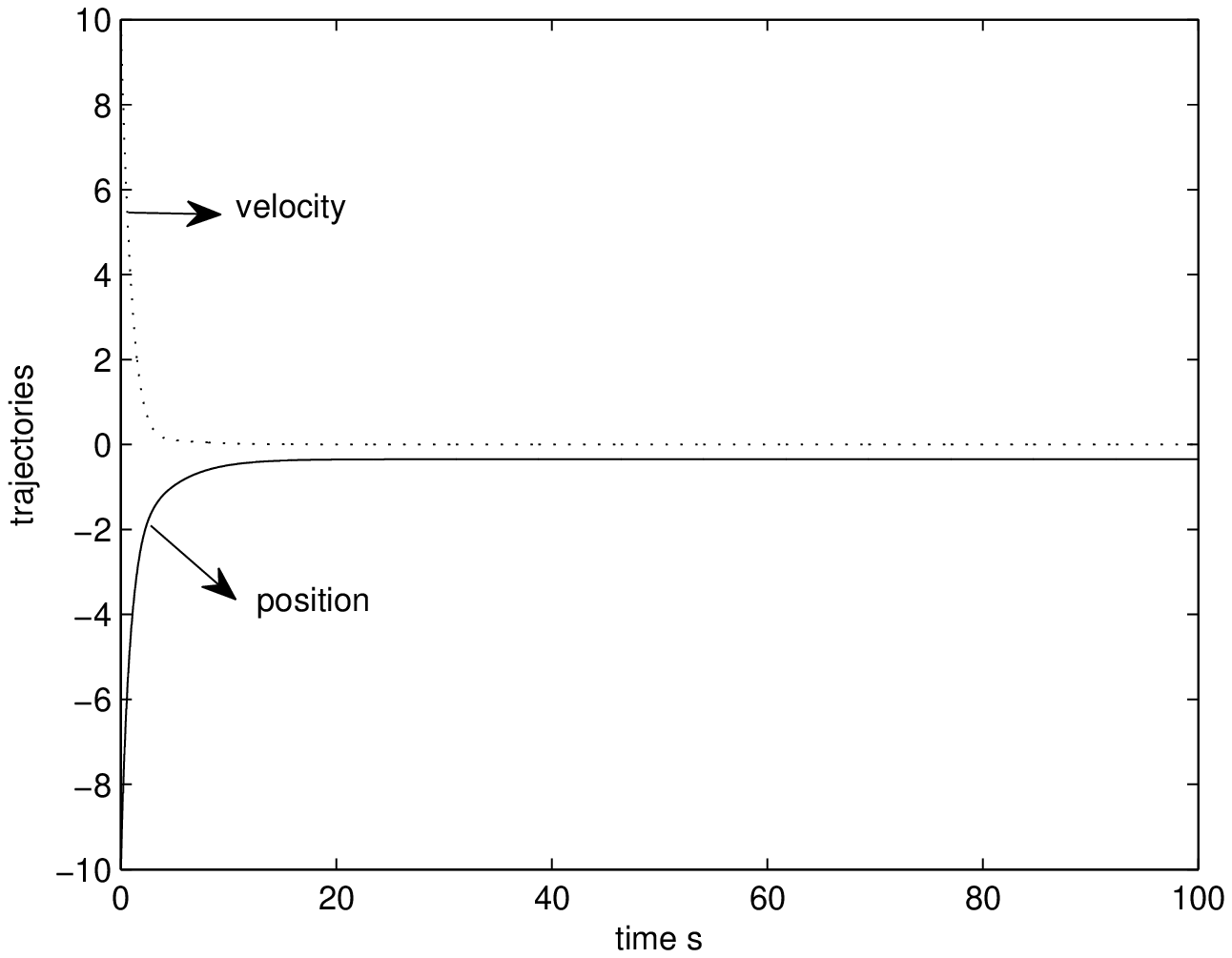}}}
\caption{Simulation results of the numerical example with CCW hysteretic actuator. (a) The negative feedback interconnection case with the initial condition $x(0)=[-10\  5]^T$; (b) The positive feedback interconnection case with the initial condition $x(0)=[-10\  10]^T$.}
\label{ng_mds_sim} \vspace{0.2cm}
\end{figure}

\subsection{CW hysteretic actuator}


For the case of a CW hysteretic actuator, see for example the magnetorheological (MR) damper used in the structure control \cite{SAKAI2003}, the mass-damper-spring system is given by \rfb{mds}. Assume that the actuator is represented by the Duhem operator \rfb{babuskamodel} where
\begin{equation}\label{exf1f2}
f_1(\gamma, v) =0.25(1-\gamma),\ f_2(\gamma, v) =0.25(1+\gamma), \  \forall (\gamma, v)\in \rline^2.
\end{equation}
The anhysteresis function for this Duhem operator is $f_{an}=0$. It can be shown that $f_1 \leq 0.25$ and $f_2 \leq 0.25$ for all $v \in \rline$. In addition $f_1$ and $f_2$ satisfy the hypotheses in Theorem \ref{thm2}, hence the Duhem operator with \rfb{exf1f2} is CW.

With $A_c=\sbm{0& 1 \\ -2& -4}$, $B_c=\sbm{0 \\1}$, $C_c=\sbm{1&1}$ and $D_c=0$, the conditions \rfb{LMI31}-\rfb{LMI32} are solvable with $P=\sbm{5& 1&-5&-2 \\ 1&3&-1&-3\\-5&-1&6&3\\-2&-3&3& 6}$. Hence the controller $\mathbf{C}$ can stabilize the closed-loop system with negative feedback interconnection. In this case, $N=[1\ 0\ -2\ -3\ 1]$. According to Theorem \ref{SOCCW-CW}, the velocity of the mass-damper-spring system converges to zero and the position of the mass-damper-spring system converges to a constant. The simulation results are shown in Figure
\ref{ng_mds_sim_cwhys}(a) with the initial condition $x(0)= [10\ 5]^T$.

Since we have $f_1 \leq 0.25$ and $f_2 \leq 0.25$ for all $v \in \rline$, by taking $A_c=\sbm{0& 1 \\ -2& -3}$, $B_c=\sbm{0 \\1}$, $C_c=\sbm{-3&-1}$ and $D_c=2$, it can be checked that \rfb{LMI41}-\rfb{LMI42} holds with $\delta=1$, $\eta=0.5$, $L=\sbm{0&1/4&0&0&0}$ and $P=\sbm{2 & 2&0&-3 &-1\\ 2&30.86&15.83&-32.86&-26.9\\0&15.83&38.26&-15.83&-51.4\\-3&-32.86&-15.83&35.86&27.9\\-1&-26.9&-51.4&27.9&74.54}$. It follows from Theorem \ref{CW-CW} that the velocity of the mass-damper-spring system converges to zero and the position of the mass-damper-spring system converges to a constant. The simulation results is shown in Figure \ref{ng_mds_sim_cwhys}(b).
%
%

\begin{figure}[h]
\centering
 \subfigure[] {{\includegraphics[height=2in]{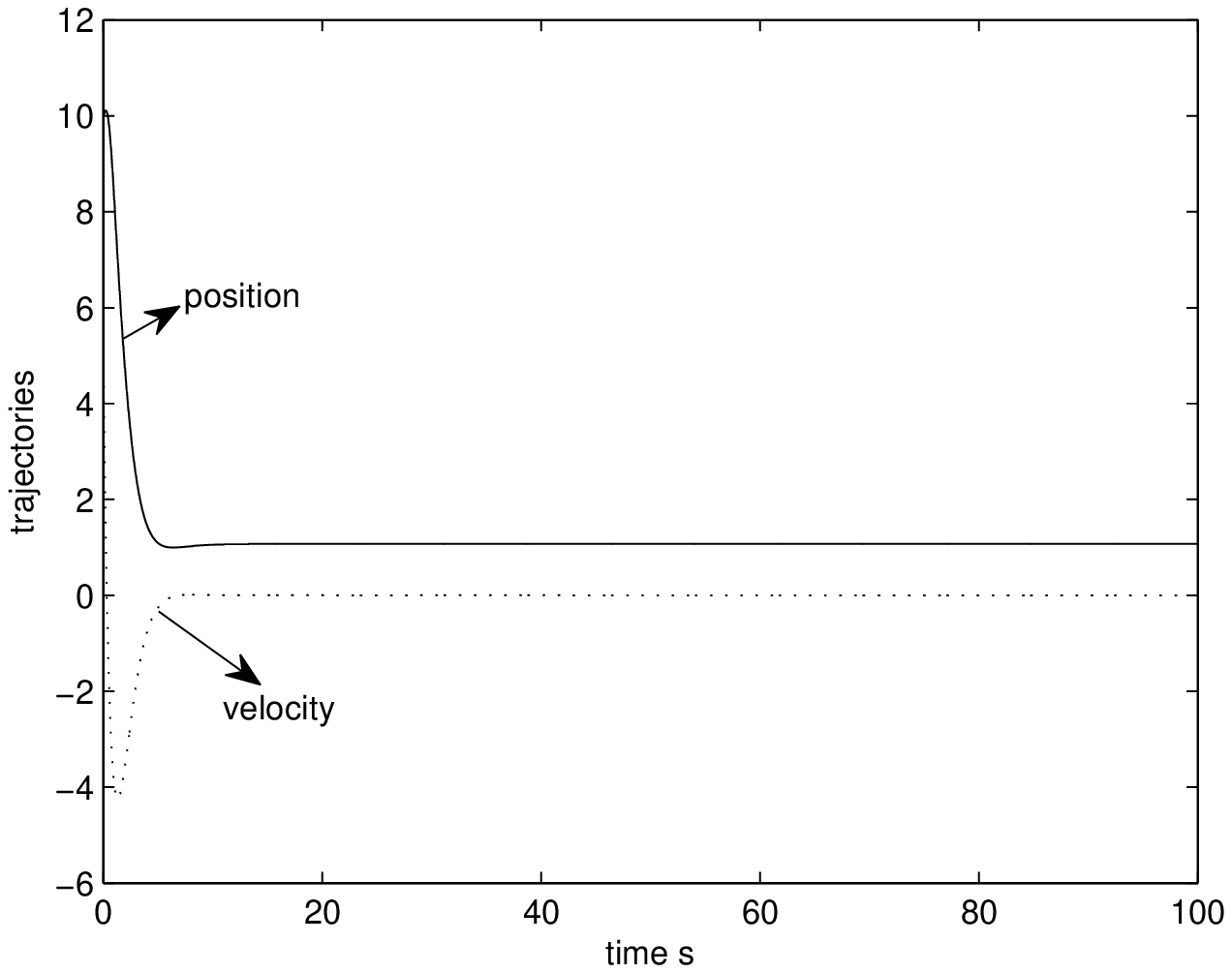}}}
 \subfigure[] {{\includegraphics[height=2in]{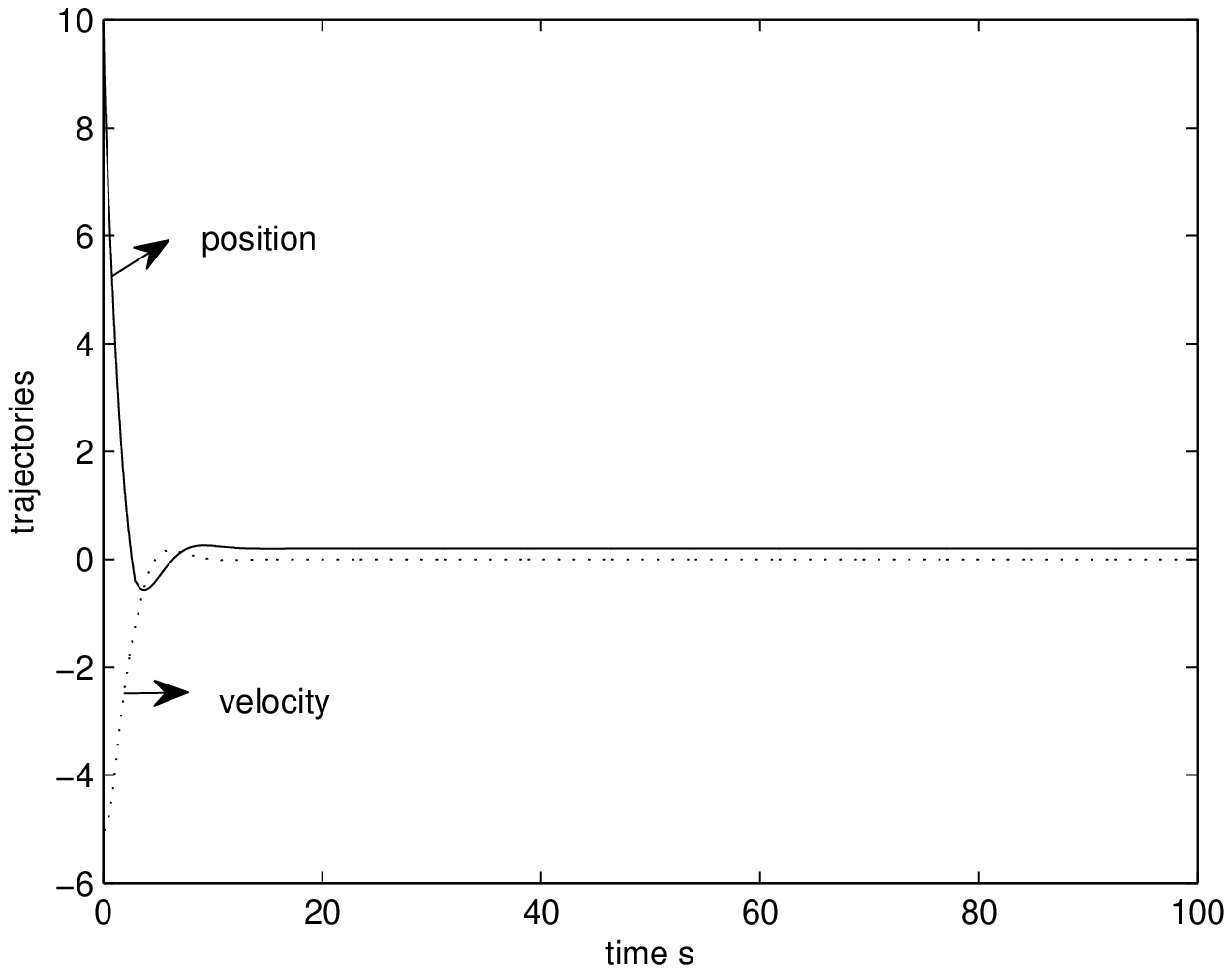}}}
\caption{Simulation results of the numerical example with CW hysteretic actuator. (a) The negative feedback interconnection case with the initial condition $x(0)=[10\  5]^T$; (b) The positive feedback interconnection case with the initial condition $x(0)=[10\  -5]^T$.}
\label{ng_mds_sim_cwhys} \vspace{0.2cm}
\end{figure}
\section{Conclusions}
It has been shown in this paper that the stability analysis of a linear system with hysteresis nonlinearity is accommodated by exploiting the I/O property of the corresponding hysteresis operator. Furthermore, the stability analysis enables a straightforward control design methodology for a plant with hysteresis nonlinearity without having to know precisely the parameters of the hysteresis operator. It offers a different paradigm in the design of controller for such systems where we do not need to define an inverse hysteresis operator which is commonly used in practice. The dissipativity approach which is used in this paper can be extended directly to nonlinear plants with hysteresis nonlinearity. One possible class of nonlinear plants which can be treated with our approach is the CCW systems as studied by Angeli \cite{ANGELI2006} and by van der Schaft \cite{SCHAFT2011}.




\end{document}